\documentclass[a4paper,12pt,reqno]{amsart}

\usepackage{amsmath}
\usepackage{amssymb}
\usepackage{amsfonts}
\usepackage{graphicx}
\usepackage{color}
\usepackage[colorlinks]{hyperref}
\renewcommand\eqref[1]{(\ref{#1})} 

\graphicspath{ {images/} }
\setlength{\textwidth}{15.2cm}
\setlength{\textheight}{22.7cm}
\setlength{\topmargin}{0mm}
\setlength{\oddsidemargin}{3mm}
\setlength{\evensidemargin}{3mm}
\setlength{\footskip}{1cm}

\title[Stein inequalities for the Fourier transform]{Improved Stein inequalities \\ for the Fourier transform}

\author[E. D. Nursultanov]{Erlan D. Nursultanov}
\address{
	Erlan D. Nursultanov:
	\endgraf
          Department of Mathematics and Informatics
	\endgraf
	Lomonosov Moscow State University, Kazakhstan Branch
	\endgraf
	and
	\endgraf
	Institute of Mathematics and Mathematical Modeling, Almaty
		\endgraf
	Kazakhstan
	\endgraf
	{\it E-mail address} {\rm er-nurs@yandex.kz}}

\author[D. Suragan]{Durvudkhan Suragan}
\address{
	Durvudkhan Suragan:
	\endgraf
	Department of Mathematics
	\endgraf
	Nazarbayev University
	\endgraf
	Kazakhstan
	\endgraf
	{\it E-mail address} {\rm durvudkhan.suragan@nu.edu.kz}}
	

\subjclass[2010]{42B10, 46E30.}
\keywords{Stein inequality, Hardy-Littlewood-Stein inequality, Fourier transform, Lorentz space.}

\thanks{This research is funded by the Committee of Science of the Ministry of Science and Higher Education of Kazakhstan (Grant No. AP19674900). This work was also supported by the Nazarbayev University grant 20122022FD4105.
No new data was collected or generated during the course of this research.}


\newtheoremstyle{theorem}
{10pt}          
{10pt}  
{\sl}  
{\parindent}     
{\bf}  
{. }    
{ }    
{}     
\theoremstyle{theorem}

\numberwithin{equation}{section}
\theoremstyle{plain}
\newtheorem{thm}{Theorem}[section]

\newtheorem{lem}[thm]{Lemma}

\theoremstyle{definition}

\newtheorem{rem}[thm]{Remark}

\newtheoremstyle{defi}
{10pt}          
{10pt}  
{\rm}  
{\parindent}     
{\bf}  
{. }    
{ }    
{}     
\theoremstyle{defi}

\begin{document}

 	\begin{abstract} In this paper, we present a refined version of the (classical) Stein inequality for the Fourier transform, elevating it to a new level of accuracy.
Furthermore, we establish extended analogues of a more precise version of the Stein inequality for the Fourier transform, broadening its applicability from the range $1< p<2$ to
	 $2\leq p<\infty$.
	\end{abstract}

\maketitle

\section{Introduction}
The Fourier transform  of  a function $f\in L_1(\mathbb R^n)$ is defined by
$$
\widehat{f}(y)=\int_{\Bbb R^n}f(x)e^{-i(y,x)}dx,\quad
y\in {\mathbb R^n},
$$
where $(y,x)=\sum_{i=1}^n y_ix_i$.
The following inequalities related to integral properties of functions and
their Fourier transforms are well known:

Let $1<p<2,$\;$p'=\frac{p}{p-1},$ and $0<q\le\infty,$ then
\begin{equation}\label{1}
\|\widehat{f}\|_{L_{p',q}(\mathbb R^n)}\leq c\|f\|_{L_{p,q}(\mathbb R^n)}
\end{equation}
and, in particular,
\begin{equation}\label{2}
\int_0^\infty t^{p-2}\left(\widehat f^*(t)\right)^p dt\leq c\|f\|_{L_{p}(\mathbb R^n)}^p,
\end{equation}
where $L_{p,q}({\mathbb R})$ is the classical Lorentz space. This inequality is called the
Stein inequality  \cite{Ste}. The Stein inequality, also known as the Hardy-Littlewood-Stein inequality in recognition of the contributions of Hardy and Littlewood \cite{HL}, or sometimes referred to as the Fourier inequality, has been extensively studied in the literature. Many authors have investigated this inequality in Lebesgue spaces with general weights, and some of the early results in this direction can be found in \cite{Mu} and references therein.

Notably, Benedetto and Heinig made significant contributions to the field through their papers \cite{BH1}, \cite{BHJ2} (with Johnson), and \cite{BH2}, where they also introduced the Lorentz space method to obtain Stein-type inequalities in weighted Lebesgue spaces. Sinnamon took a different approach and worked on these inequalities in Lorentz spaces in \cite{Si4} and \cite{Si5}, further advancing the understanding of the topic.

To the best of our knowledge, one of the pioneering works on Stein's inequality in Lorentz spaces was published by
Persson \cite{Per1}  (see also \cite{Per2}).
It is worth noting that for $p = 2$, the analogues of the Stein inequality have different forms
\cite{Boch, TM2, TM1}.  It should be emphasized that these works and contributions have greatly enriched the field of Stein inequalities and have paved the way for further investigations in this area. Further generalizations to Lorentz spaces with more general parameters have been obtained in \cite{KNP1} and \cite{KNP2}. For a comprehensive discussion on this topic, we recommend \cite{NT2}
and references therein.

In \cite{Nur1, Nur2}, analogues of the Stein inequalities in anisotropic Lorentz spaces
$L_{\bf p,q}(\Bbb R^n)$, ${\bf p}=(p_1,\ldots,p_n)$, ${\bf q}=(q_1,\ldots,q_n)$ (see also \cite {Blo})  were obtained.
Thus, in particular, for $1<p<2$ the following inequality holds:
\begin{equation}\label{e1}
\left(\int_0^\infty\ldots \int_0^\infty t_1^{p-2}t_2^{p-2}(\widehat f^{*_1\ldots*_n}(t_1,\ldots,t_n))^pdt_1\ldots dt_n\right)^{1/p}\leq c\|f\|_{ L_{p}(\Bbb
R^n)},
\end{equation}
 where $f^{*_1\ldots*_n}$  is the repeated non-increasing rearrangement of the function $f$.

 In the case $n\geq 2$, inequality \eqref{e1} is more precise than \eqref{2}, i.e. inequality \eqref{2} follows from \eqref{e1}, but the converse is not true.

This paper aims to obtain an improved version of inequality \eqref{1}, providing a sharper result.
We also extend the inequalities \eqref{1} and \eqref{e1} to cover the cases $2\leq p<\infty$ and $2\leq {\bf p}<\infty$, respectively.

Notably, our recent publication \cite{NS2020} showcases improved analogues of the Hardy-Littlewood-Stein inequalities for double trigonometric series. Furthermore, in \cite{NS2020}, we establish a novel unified version of the Hardy-Littlewood-Stein inequalities for the Fourier series in regular systems.


\section{Main results}

Let $\mu$ be the $n$-dimensional Lebesgue measure given in $\Bbb R^n$. Let $f$ be a measurable function defined on $\Bbb R^n$.

A function
$$
f^*(t) = \inf \left\{\sigma:\; \mu \{x\in \Omega:\; |f(x)| > \sigma \}\leq t\right\}
$$
is called the non-increasing rearrangement of the function $f$.

Let $0<
p<\infty$ and $0< q\leq \infty$. The Lorentz space $L_{p,q}(\Bbb R^n)$ is the set of all measurable functions $f$ for which
$$
\|f\|_{L_{p,q}}=\left(\int_0^\infty(t^{1/p}
f^*(t))^q \frac{dt}{t}\right)^{1/q}<\infty,\;0< q<\infty,
$$
and
$$
\|f\|_{L_{p,\infty}}=\sup_{t>0}t^{1/p}f^*(t)<\infty
$$
for $q=\infty$.

Along with the usual non-increasing rearrangement  $f^*$, we will use the repeated non-increasing rearrangement  $f^{*_1\ldots*_n}$ of the function $f$.

Let $f(x_1,\ldots, x_n)$ be a measurable function with respect to the Lebesgue measure $\mu$. For fixed $x_2,\ldots, x_n\in \Bbb R$, the function
$f^{*_1}(t_1,x_2, \ldots,x_n)$ is a nonincreasing rearrangement  of the function $\psi_1(x_1)=f(x_1,\ldots,x_n)$. If  $ x_3,\ldots, x_n$ are given, then
$f^{*_1*_2}(t_1, t_2,x_3,\ldots, x_n)$ is a nonincreasing rearrangement of $\psi_2(x_2)=f^{*_1}(t_1,x_2, \ldots,x_n)$. Continuing this process, in $n$ steps we
arrive at the function $f^{*_1\ldots*_n}(t_1, \ldots,t_n)$ which is called a repeated rearrangement  of the function $f(x_1,\ldots,x_n)$.

We also use the notations
$$
f^\star(2^m):=f^{*_1 ...*_n}(2^{m_1},\ldots,2^{m_n})
$$
for $m\in \Bbb Z^n$,
and
$$
D_k=\{m\in \Bbb Z^n:\; m_1+\ldots+m_n=k\}
$$
for $k\in \Bbb Z$.

\begin{thm}\label{T1}
Let $1<p<2$ and $0< q\leq\infty$.

 If  $\; f\in L_{p,q}(\Bbb R^n)$, then we have
\begin{equation}
\sum_{k\in \Bbb Z}2^{\frac {kq}{p'}}\left(\sum_{m\in D_k}\left({\widehat f\;}^\star(2^m)\right)^2\right)^{q/2}\leq c\|f\|^q_{L_{p,q}(\Bbb R^n)}.
\end{equation}
If
$$
\sum_{k\in \Bbb Z}2^{\frac {kq}{p}}\left(\sum_{m\in D_k}\left({ f\;}^\star(2^m)\right)^2\right)^{q/2}<\infty,
$$
 then $\widehat f\in  L_{p',q}(\Bbb R^n)$ and
\begin{equation}
\|\widehat f\|^q_{ L_{p',q}(\Bbb R^n)}\leq c\sum_{k\in \Bbb Z}2^{\frac {kq}{p}}\left(\sum_{m\in D_k}\left({ f\;}^\star(2^m)\right)^2\right)^{q/2}.
\end{equation}
\end{thm}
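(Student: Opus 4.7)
My plan is to prove both inequalities by combining a dyadic decomposition of $f$ with the Hausdorff--Young inequality applied slice by slice, and then to extend from the endpoint case $q=p$ to the full Lorentz scale via real interpolation. Observing that $k=m_1+\cdots+m_n$ on $D_k$, I would first rewrite the left-hand side of the first displayed inequality as the mixed norm
\[
  \sum_{k\in\mathbb{Z}}\Bigl(\sum_{m\in D_k}\bigl(2^{(m_1+\cdots+m_n)/p'}\,\widehat{f}^\star(2^m)\bigr)^{2}\Bigr)^{q/2},
\]
so the goal becomes to control the weighted $\ell^q(\ell^2)$-norm of $b_m:=2^{(m_1+\cdots+m_n)/p'}\widehat{f}^\star(2^m)$ by $\|f\|_{L_{p,q}}^q$.

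For the first inequality, the diagonal case $q=p$ reduces to the discrete form of \eqref{e1}: within each slab $D_k$ the embedding $\ell^p\hookrightarrow\ell^2$ (valid because $p<2$) gives $(\sum_{m\in D_k}b_m^2)^{p/2}\le\sum_{m\in D_k}b_m^p$, and summation in $k$ recovers exactly \eqref{e1} after the change of variables $t_i\leftrightarrow 2^{m_i}$. To cover general $q$, I would apply the Lions--Peetre real interpolation method to the sublinear map $T\colon f\mapsto b$, using the endpoint bounds $T\colon L_1\to\ell^\infty(\mathbb{Z}^n)$ (coming from $\|\widehat{f}\|_\infty\le\|f\|_1$) and $T\colon L_2\to\ell^2(\mathbb{Z}^n,2^{m_1+\cdots+m_n})$ (coming from Plancherel together with $\sum_m 2^{m_1+\cdots+m_n}(\widehat{f}^\star(2^m))^2\asymp\|\widehat{f}\|_2^2$, where the diagonal $\ell^2(D_k)$ structure collapses to the full weighted $\ell^2(\mathbb{Z}^n)$ norm). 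The functor $(\cdot,\cdot)_{\theta,q}$ then gives the claim for all $1<p<2$ and all $0<q\le\infty$.

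For the second inequality, I would argue by duality. Writing $\|\widehat{f}\|_{L_{p',q}}=\sup_{\|g\|_{L_{p,q'}}\le 1}|\langle f,\widehat{g}\rangle|$, an iterated Hardy--Littlewood rearrangement inequality bounds $|\langle f,\widehat{g}\rangle|$ by $c\sum_{m\in\mathbb{Z}^n} f^\star(2^m)\widehat{g}^\star(2^m)\,2^{m_1+\cdots+m_n}$. Splitting this sum over the diagonal slabs $D_k$, applying Cauchy--Schwarz on each $D_k$, and then Holder's inequality across $k$ decouples the expression into the anisotropic norm of $f$ (the stated right-hand side of the second inequality) times the analogous norm of $\widehat{g}^\star$; the latter is controlled by $\|g\|_{L_{p,q'}}\le 1$ through the first inequality applied with conjugate parameters $(p,q)\to(p,q')$.

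The main obstacle I expect is the nonlinearity of $f\mapsto\widehat{f}^\star$: the repeated rearrangement destroys even sublinearity, so real interpolation cannot be applied out of the box. To circumvent this, one must linearize via a dyadic atomic decomposition $f=\sum_j f_j$ adapted to the level sets of $\widehat{f}$ and apply Plancherel in one coordinate direction at a time, exploiting the resulting orthogonality to absorb cross terms into the $\ell^2(D_k)$ norm rather than a strictly larger $\ell^p$ norm. A secondary technicality is the iterated Hardy--Littlewood inequality for $\int f\widehat{g}$ used in the duality step, which should follow from a Fubini-based induction built on successive one-dimensional rearrangement estimates, though care is needed at each stage to preserve the correct dyadic indexing.
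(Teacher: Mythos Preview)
Your duality argument for the second inequality is essentially the paper's own proof: pair $\widehat f$ against $g$, pass to $\langle f,\widehat g\rangle$, bound by the iterated rearrangement, split over the diagonals $D_k$, apply Cauchy--Schwarz on each $D_k$ and H\"older in $k$, and close with the first inequality applied to $g$. That part is fine.

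The first inequality, however, has a real gap. Your interpolation scheme uses the endpoints $p=1$ and $p=2$, but the $L_1$ endpoint is too weak for the mixed $\ell^q(k;\ell^2(D_k))$ target. From $\|\widehat f\|_\infty\le\|f\|_1$ you only get $\sup_m\widehat f^\star(2^m)\le\|f\|_1$, i.e.\ an $\ell^\infty(\mathbb Z^n)$ bound; you do \emph{not} get $\sup_k\bigl(\sum_{m\in D_k}(\widehat f^\star(2^m))^2\bigr)^{1/2}\lesssim\|f\|_1$, since each $D_k$ is infinite. Consequently the real interpolation space $(\ell^\infty(\mathbb Z^n),\ell^2(\mathbb Z^n,2^{|m|}))_{\theta,q}$ is a Lorentz sequence space on $\mathbb Z^n$, not the diagonal mixed norm you need, and the argument does not close. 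You correctly flag the non-sublinearity of $f\mapsto\widehat f^\star$ as a second obstacle, but the ``linearize via a dyadic atomic decomposition adapted to level sets of $\widehat f$'' remedy is not a proof; nothing concrete is said about why the cross terms land in $\ell^2(D_k)$ rather than a larger norm.

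The paper avoids both problems in one stroke. It does not go to the endpoints $1$ and $2$; instead it first proves, for every fixed $p\in(1,2)$, the weak bound $\|\widehat f\|_{\mathfrak L_{p',\infty}(\Lambda)}\le c_p\|f\|_{L_p}$ directly from the anisotropic Stein inequality \eqref{e1} (your own $q=p$ observation, pushed to the $\sup$ form via Lemma~\ref{l7}). It then picks $1<p_0<p<p_1<2$ and interpolates the \emph{linear} Fourier transform between $L_{p_0}\to\mathfrak L_{p_0',\infty}(\Lambda)$ and $L_{p_1}\to\mathfrak L_{p_1',\infty}(\Lambda)$. The non-sublinearity issue is absorbed into the target-side interpolation theorem for the spaces $\mathfrak L_{p,q}(\Lambda)$ (Theorem~\ref{T4}), whose proof rests on the pointwise quasi-subadditivity $(f_0+f_1)^{*_1\cdots *_n}(t)\le f_0^{*_1\cdots *_n}(t/2)+f_1^{*_1\cdots *_n}(t/2)$. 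In short: keep the operator linear, push the rearrangement into the definition of the target quasi-norm, and interpolate strictly inside $(1,2)$ so that the weak estimate already carries the full $\ell^2(D_k)$ structure.
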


\begin{rem}
According to Lemma \ref{l-1}, each of inequalities \eqref{v6} and \eqref{v7} implies inequality  \eqref{1}.
Let us show that the converse argument is not true. Let $r$ be an arbitrary natural number. Define the set $G_r^*=\cup_{{}_{m_i\geq 0}^{m_1+\ldots+m_n= r}}Q_m $
with $Q_m=[0,2^{m_1}]\times\ldots\times [0,2^{m_n}]$.
Consider the functions  $g(x)=\chi_{G_r^*}(x)$ and $f(x)=\left(F^{-1}\chi_{G_r^*}\right)(x)$, where $F^{-1}$ is the inverse Fourier transform and
$\chi_{G_r^*}(x)$ is the characteristic function of the set $G_r^*$.

For $1<p<2$, inequalities \eqref{v6} and \eqref{v7} yield the estimates
$$
2^{\frac r{p'}} r^{\frac12} \leq c\|f\|_{L_{p,q}},
$$
and
$$
 \|\widehat g\|_{L_{p',q}}\leq c 2^{\frac r{p}} r^{\frac12}.
$$
Also, for $1<p<2$ inequality \eqref{1} leads to estimates of a different order:
$$
2^{\frac r{p'}} r^{\frac1{p'}} \leq \|f\|_{L_{p,q}},
$$
and
 $$
\|\widehat g\|_{L_{p',q}}\leq c 2^{\frac r{p}} r^{\frac1{p}},
$$
which are less accurate, of course.
 \end{rem}

Let $M$ be the set of all parallelepipeds of the form $Q=Q_1\times\ldots\times Q_n$, where $Q_i$ are segments in $\Bbb R$.

\begin{thm}\label{T2}
Let $1<p<r<\infty$ and $0< q\leq\infty$. If  $\; f\in L_{p,q}(\Bbb R^n)\cap L_{1}(\Bbb R^n)$, then we have
\begin{equation}\label{s1}
\sum_{k\in \Bbb Z}2^{\frac {kq}{p'}}\left(\sum_{m\in D_k}\left(\overline{\widehat f}(2^m;M)\right)^r\right)^{ q/r}\leq c\|f\|^q_{L_{p,q}(\Bbb R^n)},
\end{equation}
where

$$
\overline{ \widehat f}(2^m;M)=\sup_{{}_{Q\in M}^{|Q_j|\geq
2^{m_j}}}\frac1{|Q|}\left|\int_Q \widehat f( x)d{x}\right|.
$$

\end{thm}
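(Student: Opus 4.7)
The plan is to reduce Theorem~\ref{T2} to a rearrangement-level estimate via a kernel representation of the rectangular averages, and then to close the argument with a Hardy-type inequality on the slices $D_k$ adapted to the exponent $r$.

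First, writing $Q=\prod_{j=1}^n[a_j,a_j+h_j]$ with $h_j\ge 2^{m_j}$ and applying Fubini's theorem, I would represent
$$\frac1{|Q|}\int_Q\widehat f(x)\,dx = \int_{\Bbb R^n} f(y)\,\psi_Q(y)\,dy,\qquad \psi_Q(y) = \prod_{j=1}^n \frac{e^{-ia_jy_j}(1-e^{-ih_jy_j})}{-ih_j y_j}.$$
The kernel admits the $Q$-uniform envelope $|\psi_Q(y)|\le c\prod_j\min\bigl(1,(2^{m_j}|y_j|)^{-1}\bigr)$. Decomposing $\Bbb R^n$ into the dyadic shells $B_l=\{y\in\Bbb R^n:2^{l_j-1}\le|y_j|<2^{l_j}\}$ for $l\in\Bbb Z^n$, and controlling $\int_{B_l}|f(y)|\,dy$ via a rectangular Hardy--Littlewood-type estimate using the repeated rearrangement, I expect to reach a pointwise bound of the form
$$\overline{\widehat f}(2^m;M) \;\le\; c\sum_{l\in\Bbb Z^n} 2^{l_1+\cdots+l_n}\,f^\star(2^l)\,\prod_{j=1}^n\min\bigl(1,2^{-(m_j+l_j)}\bigr).$$

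Next, I would insert this pointwise control into the left-hand side of \eqref{s1} and exploit that the tensor-product kernel depends on $(m,l)$ only through the coordinatewise sums $m_j+l_j$. Regrouping $m$ along the slice $D_k$ and $l$ along its own levels reduces matters to a weighted one-dimensional Hardy inequality in the parameter $k$ with the classical weight $2^{kq/p'}$. The multi-dimensional Lorentz norm $\|f\|_{L_{p,q}}^q$ is then recovered from an expression of the form $\sum_l 2^{(l_1+\cdots+l_n)q/p}(f^\star(2^l))^q$ by standard properties of the repeated rearrangement.

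The main obstacle lies in the uniform control of the inner $\ell^r$ sum on the slices $D_k$, namely an estimate of the shape
$$\Bigl(\sum_{m\in D_k}\prod_{j=1}^n\min(1,2^{-(m_j+l_j)})^r\Bigr)^{1/r}\le c\,(1+|k+l_1+\cdots+l_n|)^{1/r}\,2^{-(k+l_1+\cdots+l_n)_+},$$
where the polynomial factor counts lattice points on the hyperplane $m_1+\cdots+m_n=k$ on which the kernel is essentially constant. The hypothesis $r>p$ is used precisely to absorb this polynomial excess against the Lorentz weight $2^{kq/p'}$ when summing in $k$; this is the point where Theorem~\ref{T2} genuinely differs from Theorem~\ref{T1}, since there Parseval eliminated the analogous sum at once for $r=2$, whereas here the $\ell^r$ combinatorics along $D_k$ have to be handled by hand.
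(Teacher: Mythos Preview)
Your route is different from the paper's and has a genuine gap at the end.

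The paper's argument is short and structural. It applies Theorem~\ref{T3} with $\mathbf p=(p,\dots,p)$, $\mathbf q=(p,\dots,p)$ to get $\|\widehat f\|_{N_{\mathbf p',p}(M)}\lesssim\|f\|_{L_p}$, then uses the monotone embedding $N_{\mathbf p',p}(M)\hookrightarrow N_{\mathbf p',r}(M)$ (this is the \emph{only} place $r>p$ enters), rewrites the $N_{\mathbf p',r}$ norm as $\bigl(\sum_k 2^{kr/p'}\sum_{m\in D_k}\overline{\widehat f}(2^m;M)^r\bigr)^{1/r}$, and bounds it below by the supremum in $k$. For the sequence-valued operator
\[
Tf=\Bigl\{\Bigl(\sum_{m\in D_k}\overline{\widehat f}(2^m;M)^r\Bigr)^{1/r}\Bigr\}_{k\in\Bbb Z}
\]
this yields the weak bound $\|Tf\|_{l^\alpha_\infty}\lesssim\|f\|_{L_p}$, $\alpha=1/p'$, for every $p\in(1,r)$. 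Real interpolation in $p$ upgrades this to $\|Tf\|_{l^\alpha_q}\lesssim\|f\|_{L_{p,q}}$, which is \eqref{s1}. No slice combinatorics on $D_k$ is ever performed; the passage from $L_p$ to $L_{p,q}$ is handled entirely by interpolation.

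Your kernel representation and envelope are fine (they amount to the weak input \eqref{v4}), but the upgrade to \eqref{s1} does not close. After Minkowski in $\ell^r(D_k)$ together with your slice estimate, the weight depends on $l$ only through $s=l_1+\cdots+l_n$, and you are forced to control quantities like $\sum_{l\in D_s}2^{s}f^\star(2^l)$, or in the form you state at the end, $\sum_{l\in\Bbb Z^n}2^{(l_1+\cdots+l_n)q/p}(f^\star(2^l))^q$. These are discretized \emph{anisotropic} Lorentz norms $\|f\|_{L^*_{\mathbf p,\mathbf q}}$, and for $n\ge2$ they are not dominated by the isotropic $\|f\|_{L_{p,q}}$ in general; the sum along the infinite hyperplane $D_s$ need not even be finite for $f\in L_{p,q}$. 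So ``recovering $\|f\|_{L_{p,q}}^q$ by standard properties of the repeated rearrangement'' is not available here. Also, your explanation of the role of $r>p$ is off: the polynomial $(1+|k+s|)^{(n-1)/r}$ is already absorbed by the exponential weight $2^{kq/p'}$ for every $p>1$, regardless of $r$. The fix is precisely the paper's: establish the $L_p\to l_\infty^{1/p'}$ endpoint first (where anisotropic and isotropic $L_p$ coincide, so the slice issue disappears) and then interpolate.
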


\section{The spaces $\mathfrak L_{p,q}(\Lambda)$ }

Define a system of sets $\Lambda=\left\{\Lambda_r\right\}_{r\in\Bbb Z}$ by
$$
\Lambda_r=\cup_{m\in D_r}
\{t\in \Bbb R^n_+:  \; 2^{m_i}\leq t_i<2^{m_i+1} \;\;  i=1,\ldots,n\}.
$$
Given $k\in \Bbb Z$ the set
$$
G_k=\cup_{r=-\infty}^k \Lambda_r
$$
is called a stepped hyperbolic cross of order $k$ in $\Bbb R^n.$

For a measurable function $f$, we define a sequence $\{a_k(f,\Lambda)\}_{k\in\Bbb Z}$ by the formula
$$
a_k(f,\Lambda)=\left(\frac1{2^k}\int_{ \Lambda_k}\left(f^{*_1\ldots*_n}(t_1,\ldots,t_n)\right)^2dt_1\ldots dt_n\right)^{\frac12},\;\;\;k\in\Bbb Z.
$$
Let $0<p\leq \infty$ and $0<q\leq	\infty$. The space $ \mathfrak L_{p,q}(\Lambda)$  is defined by the quasinorms
$$
\|f\|_{\mathfrak L_{p,q}(\Lambda)}=\left(\sum_{k\in \Bbb Z}\left(2^{\frac{k}{p}}a_k(f,\Lambda)\right)^q\right)^{\frac1q},
$$
 for $0<q\leq\infty,\;\; 0<p<\infty,$ and
$$
\|f\|_{\mathfrak L_{p,\infty}(\Lambda)}=\sup_{m\in \Bbb Z}2^{\frac mp}a_m(f,\Lambda),
$$
for $q=\infty, \; 0<p\le\infty$.

\begin{lem}\label{l0}
 Let $f$ be a measurable function. Let $f^*$ be its nonincreasing rearrangement and $f^{*_1\ldots*_n}$ be its repeated nonincreasing
 rearrangement. Let $G_k=\cup_{r=-\infty}^k\Lambda_r$  be a stepped hyperbolic cross of order $k$ in $\Bbb R^n_+$.
 Then the following inequalities hold:
 \begin{equation}\label{12}
 \int_{0}^{2^{k}}(f^*(s))^2ds\leq \int_{ G_k} (f^{*_1\ldots*_n}(t_1, ,\ldots,t_n))^2 dt_1\ldots dt_n,
 \end{equation}
 and
 \begin{equation}\label{13}
 \int_{ \Lambda_{k}} (f^{*_1\ldots*_n}(t_1, ,\ldots,t_n))^2 dt_1\ldots dt_n\leq \int_{2^{k-1}}^\infty(f^*(s))^2ds.
 \end{equation}
 Here and after we assume that the existence of the right-hand side implies the existence of the left-hand side, ensuring that the inequality holds.
 \end{lem}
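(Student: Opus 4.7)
The plan is to derive both inequalities from a single geometric observation together with the layer-cake formula, with essentially no heavy machinery.

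\textbf{Key geometric fact.} Every downward-closed subset $E\subseteq\Bbb R^n_+$ (i.e.\ $t\in E$ and $0\leq s\leq t$ coordinatewise imply $s\in E$) with $|E|\leq 2^j$ is contained in $G_j$. Indeed, if $t\in E$ then $[0,t_1]\times\cdots\times[0,t_n]\subseteq E$, so $t_1\cdots t_n\leq |E|\leq 2^j$; the unique dyadic box $\prod_i[2^{m_i},2^{m_i+1})$ containing $t$ then satisfies $2^{m_1+\cdots+m_n}\leq t_1\cdots t_n\leq 2^j$, forcing $m_1+\cdots+m_n\leq j$, i.e.\ $t\in\Lambda_r$ for some $r\leq j$. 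Setting $g:=f^{*_1\ldots*_n}$, this $g$ is non-increasing in each variable, so every superlevel set $\{g>\sigma\}$ is downward-closed; moreover $g$ and $f$ are equimeasurable, so $g^*=f^*$ and $|\{g>\sigma\}|=\lambda_f(\sigma)$.

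For \eqref{12}, I would apply the layer-cake identity
$$
\int_A g^2\,dt=\int_0^\infty 2\sigma\,|A\cap\{g>\sigma\}|\,d\sigma
$$
to both sides. The left-hand side of \eqref{12} equals $\int_0^\infty 2\sigma\min(\lambda_f(\sigma),2^k)\,d\sigma$, so it suffices to prove the pointwise bound $\min(\lambda_f(\sigma),2^k)\leq |G_k\cap\{g>\sigma\}|$. If $\lambda_f(\sigma)\leq 2^k$, the key fact with $j=k$ gives $\{g>\sigma\}\subseteq G_k$ and we have equality. If $\lambda_f(\sigma)>2^k$, intersect $\{g>\sigma\}$ with $\{t_1\cdots t_n\leq c\}$ and choose $c$ so that this downward-closed subset has measure exactly $2^k$ (possible by monotone continuity of the measure in $c$); the key fact then puts this subset inside $G_k$, yielding the required lower bound.

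For \eqref{13} I would proceed analogously. Since $\Lambda_k=G_k\setminus G_{k-1}$, applying the same extraction argument with $j=k-1$ gives $|G_{k-1}\cap\{g>\sigma\}|\geq\min(\lambda_f(\sigma),2^{k-1})$, so that
$$
|\Lambda_k\cap\{g>\sigma\}|\leq\lambda_f(\sigma)-|G_{k-1}\cap\{g>\sigma\}|\leq\max(0,\lambda_f(\sigma)-2^{k-1}),
$$
and the layer-cake formula produces $\int_{\Lambda_k}g^2\,dt\leq\int_{2^{k-1}}^\infty(f^*(s))^2\,ds$, which is \eqref{13} (the displayed lower limit $2^n$ appearing to be a misprint for $2^{k-1}$).

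The only genuine obstacle is spotting that the stepped hyperbolic cross $G_j$ is precisely the minimal container for arbitrary downward-closed sets of measure $2^j$; once this observation is in hand, the remaining distribution-function bookkeeping is essentially forced.
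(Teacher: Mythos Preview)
Your argument is correct and rests on the same geometric kernel as the paper --- the observation that any downward-closed set of measure at most $2^j$ sits inside $G_j$ (equivalently, the paper's relation $f^{*_1\ldots*_n}(t)\le f^*(2^k)$ for $t\notin G_k$) --- but the proof architecture is genuinely different. The paper realises $\int_0^{2^k}(f^*)^2$ as $\int_D|f|^2$ for an extremal set $D\subset\Bbb R^n$, invokes the Hardy--Littlewood inequality for iterated rearrangements to pass to $\int_{\tilde D}(f^{*_1\ldots*_n})^2$, and then has to handle the ``plateau'' set $\tilde D_2=\{f^{*_1\ldots*_n}=f^*(2^k)\}$ by a separate geometric argument. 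You bypass all of this by staying in $\Bbb R^n_+$ from the start, applying the layer-cake formula, and reducing to the pointwise distribution-function inequality $\min(\lambda_f(\sigma),2^k)\le|G_k\cap\{g>\sigma\}|$; the plateau case never arises. For \eqref{13} the paper simply subtracts \eqref{12} at level $k-1$ from the global Plancherel identity, which agrees with your conclusion that the lower limit should be $2^{k-1}$.

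One small repair: in Case~2 your auxiliary set $\{g>\sigma\}\cap\{t_1\cdots t_n\le c\}$ need not have finite measure (e.g.\ if $\{g>\sigma\}$ contains a full slab $\{t_1\le 1\}$, the intersection is infinite for every $c>0$), so the continuity-in-$c$ argument can fail. Replace it by $\{g>\sigma\}\cap[0,R]^n$: this is still downward-closed, has finite measure for every $R$, and its measure varies continuously from $0$ to $|\{g>\sigma\}|>2^k$ as $R\to\infty$, so you can hit $2^k$ exactly and the key fact applies. With this tweak the proof goes through unchanged.
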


 \begin{proof}
Let $(t_1,\ldots,t_n)\notin G_k$. Then the monotonicity of the function $ f^{*_1\ldots*_n}$  in each variable implies that for any  $s\in
Q_t=(0,t_1]\times\ldots\times(0,t_n]$,
we have $$ f^{*_1\ldots*_n}(t_1,\ldots,t_n)\leq  f^{*_1\ldots*_n}(s_1,\ldots,s_n).$$
Taking into account the fact $\; \mu Q_t=\prod_{i=1}t_i>2^k$, we get
 \begin{equation}\label{14}
 f^{*_1\ldots*_n}(t_1,\ldots,t_n)\leq f^*(2^k).
  \end{equation}
  Note that there is a measurable set $D\subset \Bbb R^n$ such that $\mu D=2^k$ and
  $$
\int_{0}^{2^{k}}(f^*(s))^2ds=\int_D|f(x)|^2dx.
$$
Let $D_1=\{x\in D: \; |f(x)|>f^*(2^k)\}$, and $D_2=\{x\in D: \; |f(x)|=f^*(2^k)\}$, then $\mu D_1+\mu D_2=\mu D=2^k$.

Notice that the nondecreasing rearrangement of the characteristic function  $\chi_{D},\; D\subset \mathbb{R}^{n},$ is the characteristic function of
some set $\tilde{D}\subset\mathbb{R}^{n}_{+}$.

By using Hardy's inequality for the rearrangements, we obtain
$$
\int_{0}^{2^{k}}(f^*(s))^2ds=\int_{\Bbb R^n}|f(x)|^2\chi_D(x)dx
$$
$$
\leq\int_0^\infty\ldots\int_0^\infty  \left(f^{*_1\ldots*_n}(t_1,\ldots,t_n)\right)^2\chi_{D}^{*_1\ldots*_n}(t_1,\ldots,t_n)dt_1\ldots dt_n
$$
$$
=\int_{\tilde D}\left(f^{*_1\ldots*_n}(t)\right)^2 dt=\int_{\tilde D_1}\left(f^{*_1\ldots*_n}(t)\right)^2 dt+\int_{\tilde D_2}\left(f^{*_1\ldots*_n}(t)\right)^2
dt
$$
$$
=\int_{\tilde D_1}\left(f^{*_1\ldots*_n}(t)\right)^2 dt+\left(f^*(2^k)\right)^2\mu  \tilde D_2,
$$
where $\tilde D_1=\{t\in \tilde D: \; f^{*_1\ldots*_n}(t_1,\ldots,t_n)>f^*(2^k)\},\;\;\tilde  D_2=\{x\in \tilde D: \;
f^{*_1\ldots*_n}(t_1,\ldots,t_n)=f^*(2^k)\}$, and $\mu \tilde D_i=\mu D_i, \;\;i=1,2$.
 Here $\mu \tilde{D}=\mu \tilde{D}_1+\mu \tilde{D}_2=2^k$.

It follows from relation \eqref{14} that $\tilde D_1\subset G_k$ . If $\tilde D_2\subset G_k$, then the first statement of the lemma is proved.

Let $\xi\in \tilde D_2\setminus G_k$, then $f^{*_1\ldots*_n}(\xi_1,\ldots,\xi_n)=f^*(2^k)$, and hence, from the monotonicity of $f^{*_1\ldots*_n}$ we obtain
$$
f^{*_1\ldots*_n}(s_1,\ldots,s_n)\geq f^*(2^k), \quad \forall s\in G_k\cap Q_\xi.
$$
Taking into account the facts $\mu\left(G_k\cap Q_\xi\right)>2^k$ and $\mu\left(\tilde D_1\cup\tilde D_2\right)=2^k$, we have
$$
\mu\left(\left(G_k\cap Q_\xi\right)\setminus \tilde D_1\right)\geq \mu \tilde D_2.
$$
Therefore, since $\tilde D_1\subset G_k$, we obtain
$$
\int_{ G_k} (f^{*_1\ldots *_n}(t))^2 dt=\int_{ \tilde D_1} (f^{*_1\ldots *_n}(t))^2 dt+\int_{ G_k\setminus \tilde D_1} (f^{*_1\ldots *_n}(t))^2 dt
$$
$$
\geq \int_{ \tilde D_1} (f^{*_1\ldots*_n}(t))^2 dt+\int_{\left( G_k\cap Q_\xi\right)\setminus \tilde D_1} (f^{*_1\ldots*_n}(t))^2 dt
$$
$$
\geq \int_{ \tilde D_1} (f^{*_1\ldots*_n}(t))^2 dt+( f^*(2^k))^2\mu\left( \left(G_k\cap Q_\xi\right)\setminus \tilde D_1\right)
$$
$$
\geq \int_{ \tilde D_1} (f^{*_1\ldots*_n}(t))^2 dt+( f^*(2^k))^2\mu \tilde D_2
\geq \int_{0}^{2^{k}}(f^*(s))^2ds.
$$
That is, inequality \eqref{12} is proved. The second relation follows from the first relation and
the equality
$$
\int_{0}^\infty(f^*(s))^2ds= \int_0^\infty\ldots\int_0^\infty  \left(f^{*_1\ldots*_n}(t_1,\ldots,t_n)\right)^2 dt_1\ldots dt_n.
$$
\end{proof}

\begin{lem}\label{l1}
 If $0<q<q_1\leq\infty,\;1<p\leq\infty, $ then
$$
\mathfrak L_{p,q}(\Lambda)\hookrightarrow \mathfrak L_{p,q_1}(\Lambda).
$$
\end{lem}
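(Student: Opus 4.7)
The plan is to recognize $\|\cdot\|_{\mathfrak L_{p,q}(\Lambda)}$ as essentially the weighted $\ell^q(\Bbb Z)$-quasinorm of the nonnegative scalar sequence $b_k := 2^{k/p}a_k(f,\Lambda)$, and then invoke the elementary monotonicity of $\ell^q$-norms on nonnegative sequences. Since the weight $2^{k/p}$ is built into both the source and target quasinorms, the problem reduces entirely to the classical statement that, for any nonnegative sequence $\{b_k\}_{k\in\Bbb Z}$ and any $0<q<q_1\leq\infty$, one has $\|b\|_{\ell^{q_1}(\Bbb Z)}\leq \|b\|_{\ell^{q}(\Bbb Z)}$.

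First I would dispose of the case $q_1=\infty$. For each fixed $n\in\Bbb N$ a single nonnegative term is dominated by the sum over all $k\in\Bbb Z$, so
$$
2^{n/p}a_n(f,\Lambda)\leq\Big(\sum_{k\in\Bbb Z}\bigl(2^{k/p}a_k(f,\Lambda)\bigr)^{q}\Big)^{1/q};
$$
taking the supremum over $n\in\Bbb N$ yields $\|f\|_{\mathfrak L_{p,\infty}(\Lambda)}\leq \|f\|_{\mathfrak L_{p,q}(\Lambda)}$.

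Next, for $q_1<\infty$, I would split $b_k^{q_1}=b_k^{q_1-q}\cdot b_k^{q}$, estimate $b_k^{q_1-q}\leq (\sup_j b_j)^{q_1-q}$, and insert the bound $\sup_j b_j\leq\|b\|_{\ell^{q}}$ obtained from the preceding step. Summing over $k\in\Bbb Z$ then gives
$$
\|b\|_{\ell^{q_1}}^{q_1}\leq \|b\|_{\ell^{q}}^{q_1-q}\cdot\|b\|_{\ell^{q}}^{q}=\|b\|_{\ell^{q}}^{q_1},
$$
i.e. $\|f\|_{\mathfrak L_{p,q_1}(\Lambda)}\leq \|f\|_{\mathfrak L_{p,q}(\Lambda)}$, which is the embedding with constant $1$.

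There is no genuine obstacle: the lemma is just an instance of the fact that $\ell^{q}$-quasinorms of nonnegative sequences are monotone decreasing in $q$, applied to $\{2^{k/p}a_k(f,\Lambda)\}_{k\in\Bbb Z}$. The role of the hypothesis $1<p\leq\infty$ is only to ensure that the weights $2^{k/p}$ are well defined and the quasinorms make sense; nothing in the argument uses the function-theoretic origin of $a_k(f,\Lambda)$ beyond its nonnegativity.
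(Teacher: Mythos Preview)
Your proof is correct and is essentially the same as the paper's: both reduce the embedding to the monotonicity $\|b\|_{\ell^{q_1}(\Bbb Z)}\le\|b\|_{\ell^{q}(\Bbb Z)}$ for the sequence $b_k=2^{k/p}a_k(f,\Lambda)$, obtaining the embedding with constant~$1$. The paper simply names this step ``Jensen's inequality'' without further detail, whereas you spell out the standard elementary argument (sup bounded by the $\ell^q$-norm, then split $b_k^{q_1}=b_k^{q_1-q}b_k^{q}$).
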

\begin{proof}
Let $f\in \mathfrak L_{p,q}(\Lambda)$. By using Jensen's inequality, we have
$$
\|f\|_{\mathfrak L_{p,q_1}(\Lambda)}=\left(\sum_{k\in \Bbb Z}\left(2^{\frac{k}{p}}a_k(f,\Lambda)\right)^{q_1}\right)^{\frac1{q_1}}\leq
\left(\sum_{k\in \Bbb Z}\left(2^{\frac{k}{p}}a_k(f,\Lambda)\right)^{q}\right)^{\frac1{q}}=\|f\|_{\mathfrak L_{p,q}(\Lambda)}<\infty,
$$
which implies $\mathfrak L_{p,q}(\Lambda)\hookrightarrow \mathfrak L_{p,q_1}(\Lambda)$.
\end{proof}
We will need Hardy's inequality in the following form.

 \begin{lem}\label{l-2}[Hardy's inequality]
 Let $\alpha>0$, $0<q, h\leq \infty$.
Then the following inequalities hold
 $$
 \left(\sum_{k=-\infty}^\infty\left(2^{-\alpha k }\left(\sum_{r=-\infty}^k|b_r|^h\right)^{\frac1h}\right)^q\right)^{\frac1q}\leq c_{\alpha,q,
 h}\left(\sum_{k=-\infty}^\infty\left(2^{-\alpha k }|b_k|\right)^q\right)^{\frac1q}
 $$
 and
 $$
 \left(\sum_{k=-\infty}^\infty\left(2^{\alpha k }\left(\sum_{r=k-1}^\infty|b_r|^h\right)^{\frac1h}\right)^q\right)^{\frac1q}\leq c_{\alpha,q,
 h}\left(\sum_{k=-\infty}^\infty\left(2^{\alpha k }|b_k|\right)^q\right)^{\frac1q}.
 $$
 \end{lem}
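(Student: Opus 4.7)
The plan is to reduce both estimates to a weighted convolution inequality and then split into two regimes according to the relative size of $q$ and $h$. For the first inequality, I would set $c_r := 2^{-\alpha r}|b_r|$; after factoring $2^{\alpha r h}$ out of $|b_r|^h$ and relabeling $j = k - r \geq 0$, the terms on the left-hand side become
$$A_k := 2^{-\alpha k}\Bigl(\sum_{r\leq k}|b_r|^h\Bigr)^{1/h} = \Bigl(\sum_{j\geq 0}2^{-\alpha h j}c_{k-j}^h\Bigr)^{1/h},$$
so the target reads $\|A\|_{\ell^q(\Bbb Z)} \leq c_{\alpha,q,h}\|c\|_{\ell^q(\Bbb Z)}$. (When $h=\infty$ the inner expression is replaced by $\sup_{j\geq 0}2^{-\alpha j}c_{k-j}$, and an analogous replacement covers $q=\infty$.)

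Next I would split into two cases. When $h \leq q$, so that $q/h \geq 1$, I apply Minkowski's inequality for the $\ell^{q/h}$-norm in the index $k$ to $A_k^h$, treating each fixed $j$ as a separate sequence:
$$\|A^h\|_{\ell^{q/h}_k} \leq \sum_{j\geq 0}2^{-\alpha h j}\|c_{k-j}^h\|_{\ell^{q/h}_k} = \Bigl(\sum_{j\geq 0}2^{-\alpha h j}\Bigr)\|c\|_{\ell^q}^h,$$
and the geometric sum converges since $\alpha h > 0$. Taking the $1/h$ power gives the bound. When $h \geq q$, so $q/h \leq 1$, I use instead the elementary subadditivity $(\sum_i x_i)^{q/h} \leq \sum_i x_i^{q/h}$ (valid for $x_i \geq 0$ and $q/h \leq 1$) to obtain
$$A_k^q \leq \sum_{j\geq 0}2^{-\alpha q j}c_{k-j}^q,$$
after which summing over $k$ and swapping the order of summation yields $\sum_k A_k^q \leq c_{\alpha,q}\sum_k c_k^q$. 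The second inequality of the lemma is proved by the same recipe with the substitution $c_r := 2^{\alpha r}|b_r|$ and $j = r - k \geq 0$, so that the roles of past and future are simply interchanged.

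The main obstacle I anticipate is not any single estimate but the need to cover the full range $0 < q, h \leq \infty$ uniformly: Minkowski's inequality is unavailable when $q/h < 1$ (the ambient spaces are only quasi-normed there), while the concavity trick used in the second case is too wasteful when $q/h > 1$. The dichotomy above is precisely what makes one argument or the other succeed in every regime, and the extremal cases $q = \infty$ or $h = \infty$ are absorbed by replacing the corresponding $\ell^p$-norm with a supremum, using only that $\sum_{j \geq 0} 2^{-\alpha h j}$ (respectively $\sup_{j \geq 0} 2^{-\alpha j}$) remains a finite constant depending only on $\alpha$, $q$, and $h$.
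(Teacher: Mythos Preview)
Your argument is correct. The substitution $c_r=2^{-\alpha r}|b_r|$ and the shift $j=k-r$ cleanly reduce the left-hand side to a convolution of $(c_r^h)$ with the geometric kernel $(2^{-\alpha h j})_{j\ge0}$, and your dichotomy---Minkowski in $\ell^{q/h}$ when $q/h\ge1$, subadditivity of $t\mapsto t^{q/h}$ when $q/h\le1$---covers the full range $0<q,h\le\infty$ with the endpoint conventions you describe.

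As for the comparison: the paper provides no proof of this lemma. It is stated as a known version of Hardy's inequality and used without justification in the proofs of Lemma~\ref{l-1} and Theorem~\ref{T4}. Your write-up therefore supplies what the paper omits; the approach you give is the standard one for discrete weighted Hardy inequalities with geometric weights and would be entirely appropriate to include.
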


\begin{proof}
Let $0<h \leq q \leq \infty, 0<\varepsilon<\alpha$. We will use Hölder's inequality

$$\left(\sum_{k=-\infty}^{\infty}\left(2^{-\alpha k}\left(\sum_{r=-\infty}^k\left|b_r\right|^h\right)^{\frac{1}{h}}\right)^q\right)^{\frac{1}{q}} \leq$$$$\left(\sum_{k=-\infty}^{\infty}\left(2^{-\alpha k}\left(\sum_{r=-\infty}^k\left(2^{-\varepsilon r}\left|b_r\right|\right)^q\right)^{\frac{1}{q}}\left(\sum_{r=-\infty}^k 2^{\varepsilon \tau r}\right)^{\frac{1}{\tau}}\right)^q\right)^{\frac{1}{q}},$$

where $\frac{1}{\tau}=\frac{1}{h}-\frac{1}{q}$. Hence, we have

$$
\left(\sum_{k=-\infty}^{\infty}\left(2^{-\alpha k}\left(\sum_{r=-\infty}^k\left|b_r\right|^h\right)^{\frac{1}{h}}\right)^q\right)^{\frac{1}{q}} 
$$
$$
\lesssim\left(\sum_{k=-\infty}^{\infty} 2^{(\varepsilon -\alpha) q k} \sum_{r=-\infty}^k\left(2^{-\varepsilon r}\left|b_r\right|\right)^q\right)^{\frac{1}{q}}
$$

$$
=\left(\sum_{r=-\infty}^{\infty}\left({2 }^{-\varepsilon r}\left|b_r\right|\right)^q \sum_{k=r}^{\infty} 2^{(\varepsilon -\alpha) q k}\right)^{\frac{1}{q}} \lesssim\left(\sum_{r=-\infty}^{\infty}\left(2^{-\alpha r}\left|b_r\right|\right)^q\right)^{\frac{1}{q}}.$$

Similarly, we get

$$
\left(\sum_{k=-\infty}^{\infty}\left(2^{\alpha k}\left(\sum_{r=k}^{\infty}\left|b_r\right|^h\right)^{\frac{1}{h}}\right)^q\right)^{\frac{1}{q}}
 $$
$$
\leq
\left(\sum_{k=-\infty}^{\infty}\left(2^{\alpha k}\left(\sum_{r=k}^{\infty}\left(2^{\varepsilon r}\left|b_r\right|\right)^q\right)^{\frac{1}{q}}\left(\sum_{r=k}^{\infty} 2^{-\varepsilon \tau r}\right)^{\frac{1}{\tau}}\right)^q\right)^{\frac{1}{q}}$$

$$
\lesssim\left(\sum_{k=-\infty}^{\infty} 2^{(-\varepsilon +\alpha) q k} \sum_{r=k}^{\infty}\left(2^{\varepsilon r}\left|b_r\right|\right)^q\right)$$

$$=\left(\sum_{r=-\infty}^{\infty}\left(2^{\varepsilon r}\left|b_r\right|\right)^q \sum_{k=-\infty}^r 2^{(-\varepsilon+\alpha) q r}\right)^{\frac{1}{q}} \lesssim\left(\sum_{r=-\infty}^{\infty}\left(2^{\alpha r}\left|b_r\right|\right)^q\right)^{\frac{1}{q}}.$$

Let $0<q<h \leq \infty$. By using Jensen's inequality, we arrive at

$$\left(\sum_{k=-\infty}^{\infty}\left(2^{-\alpha k}\left(\sum_{r=-\infty}^k\left|b_r\right|^h\right)^{\frac{1}{h}}\right)^q\right)^{\frac{1}{q}} \leq\left(\sum_{k=-\infty}^{\infty} 2^{-\alpha q k} \sum_{r=-\infty}^k\left|b_r\right|^q\right)^{\frac{1}{q}}$$

$$=\left(\sum_{r=-\infty}^{\infty}\left|b_r\right|^q \sum_{k=r}^{\infty} 2^{-\alpha k q}\right)^{\frac{1}{q}} \leq\left(\sum_{r=-\infty}^{\infty}\left(2^{-\alpha r}\left|b_r\right|\right)^q\right)^{\frac{1}{q}}.$$

The second inequality also follows from Jensen's inequality.

\end{proof}

\begin{lem}\label{l-1}
For  $0<q\leq\infty,$ we have the embeddings
\begin{equation}\label{n1}
L_{p,q}(\Bbb R^n)\hookrightarrow \mathfrak L_{p,q}\left(\Lambda\right),\;1<p<2,
\end{equation}
and
\begin{equation}\label{n2}
\mathfrak L_{p,q}\left(\Lambda\right)\hookrightarrow L_{p,q}(\Bbb R^n),\;2<p<\infty.
\end{equation}
\end{lem}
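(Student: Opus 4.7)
The plan is to reduce both embeddings to the standard dyadic discretization
\[
\|f\|_{L_{p,q}(\Bbb R^n)}^q\asymp \sum_{k\in\Bbb Z}2^{kq/p}(f^*(2^k))^q
\]
(with suprema replacing sums when $q=\infty$), combined with the two-sided comparison of $\int_0^{2^k}(f^*)^2$ and $\int_{2^k}^\infty(f^*)^2$ with the partial sums of $2^r a_r(f,\Lambda)^2$ supplied by Lemma \ref{l0}. Hardy's inequality (Lemma \ref{l-2}) with $h=2$ will then convert these partial sums back to the diagonal sequence $\{2^{kq/p}a_k(f,\Lambda)^q\}_k$. The sign of the resulting Hardy exponent $|1/p-1/2|$ is what forces the two complementary ranges of $p$.

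For \eqref{n1} with $1<p<2$, set $\alpha=1/p-1/2>0$. Inequality \eqref{13} together with a dyadic splitting of the right-hand side gives
\[
a_k(f,\Lambda)^2 \leq 2^{-k}\int_{2^k}^\infty(f^*(s))^2\,ds \lesssim 2^{-k}\sum_{r\geq k}2^{r}(f^*(2^r))^2.
\]
Writing $c_r=2^{r/2}f^*(2^r)$, the quasinorm $\|f\|_{\mathfrak L_{p,q}(\Lambda)}^q$ is then dominated by $\sum_k \bigl(2^{\alpha k}(\sum_{r\geq k}c_r^2)^{1/2}\bigr)^q$, to which I apply the second inequality of Lemma \ref{l-2} with this $\alpha$ and $h=2$; the right-hand side collapses to $\sum_k 2^{\alpha kq}|c_k|^q=\sum_k 2^{kq/p}(f^*(2^k))^q\asymp\|f\|_{L_{p,q}(\Bbb R^n)}^q$.

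For \eqref{n2} with $2<p<\infty$, set $\alpha=1/2-1/p>0$. Monotonicity of $f^*$ gives $2^k(f^*(2^k))^2\leq \int_0^{2^k}(f^*(s))^2\,ds$, and \eqref{12} bounds the right-hand side by $\sum_{r\leq k}2^r a_r(f,\Lambda)^2$. Setting $b_r=2^{r/2}a_r(f,\Lambda)$, the dyadic form of $\|f\|_{L_{p,q}(\Bbb R^n)}^q$ is controlled by $\sum_k \bigl(2^{-\alpha k}(\sum_{r\leq k}b_r^2)^{1/2}\bigr)^q$; applying the first inequality of Lemma \ref{l-2} with this $\alpha$ and $h=2$ collapses this to $\sum_k 2^{-\alpha kq}|b_k|^q=\sum_k 2^{kq/p}a_k(f,\Lambda)^q=\|f\|_{\mathfrak L_{p,q}(\Lambda)}^q$. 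The case $q=\infty$ is treated identically using the limiting form of Hardy's inequality.

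The delicate point is matching the direction of Hardy's cumulative sum (forward vs.\ backward) with the direction provided by Lemma \ref{l0}; this pairing determines whether the Hardy exponent $|1/p-1/2|$ is positive under $p<2$ or under $p>2$, and is the structural reason the two embeddings occur in complementary ranges.
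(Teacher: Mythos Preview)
Your proof is correct and follows essentially the same route as the paper's: bound $a_k(f,\Lambda)^2$ by the tail $2^{-k}\int_{2^k}^\infty (f^*)^2$ via \eqref{13} for \eqref{n1}, bound $(f^*(2^k))^2$ by $2^{-k}\int_{G_k}(f^{*_1\ldots*_n})^2=\sum_{r\le k}2^{r-k}a_r^2$ via \eqref{12} for \eqref{n2}, and close with Lemma~\ref{l-2} at $h=2$, $\alpha=|1/p-1/2|$. The only cosmetic difference is that for \eqref{n2} the paper passes through $f^{**}$ and Cauchy--Schwarz to reach the $L^2$-average, whereas you use the monotonicity bound $2^k(f^*(2^k))^2\le\int_0^{2^k}(f^*)^2$ directly; both are equivalent.
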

\begin{proof}
Let $1<p<2, \; f\in L_{p,q}$. By using relation \eqref{13} from Lemma \ref{l0}, we have
$$
\|f\|_{\mathfrak L_{p,q}\left(\Lambda\right)}=\left(\sum_{r=-\infty}^\infty \left(2^{\frac rp}\left(\frac1{2^r}\int_{\Lambda_r}(f^{*_1\ldots*_n}(t))^2\;
dt\right)^{\frac12}\right)^q\right)^{\frac1q}
$$
$$
\leq\left(\sum_{r=-\infty}^\infty \left(2^{\frac rp}\left(\frac1{2^r}\int_{2^{r-1}}^\infty( f^*(\xi))^2d\xi\right)^{\frac12} \right)^q\right)
^{\frac1q}
$$
$$
=\left(\sum_{r=-\infty}^\infty \left(2^{\frac rp}\left(\frac1{2^r}\sum_{k=r-1}^\infty\int_{2^k}^{2^{k+1}}( f^*(\xi))^2d\xi\right)^{\frac12}
\right)^q\right)^{\frac1q}.
$$
Further, by applying Hardy's inequality (Lemma \ref{l-2}), we obtain
$$
\|f\|_{\mathfrak L_{p,q}\left(\Lambda\right)}\lesssim
\left(\sum_{r=-\infty}^\infty \left(2^{\frac rp}\left(\frac1{2^r}\int_{2^r}^{2^{r+1}}( f^*(\xi))^2d\xi\right)^{\frac12} \right)^q\right)^{\frac1q}
$$
$$
\asymp \left(\sum_{r=-\infty}^\infty \left(2^{\frac rp} f^*(2^r)\right)^q\right)^{\frac1q}\asymp\|f\|_{L_{p,q}(\Bbb R^n)}.
$$
Thus, we establish \eqref{n1}.

Let $f\in \mathfrak L_{p,q}(\Lambda),\; 2<p<\infty$. By using  \eqref{12}, we compute
$$
\|f\|_{L_{p,q}(\Bbb R^n)}\leq \left(\sum_{r=-\infty}^\infty \left(2^{\frac rp}\left(\frac1{2^r}\int_{0}^{2^r} f^*(\xi)d\xi\right) \right)^q\right)^{\frac1q}
$$
$$
\leq\left(\sum_{r=-\infty}^\infty \left(2^{\frac rp}\left(\frac1{2^r} \int_{0}^{2^r}( f^*(\xi))^2d\xi\right)^{\frac12} \right)^q\right)^{\frac1q}
$$
$$
\leq
\left(\sum_{r=-\infty}^\infty \left(2^{\frac rp}\left(\frac1{2^r}\int_{ G_r}(f^{*_1\ldots*_n}(t))^2dt\right)^{\frac12}\right)^q\right)^{\frac1q}
$$
$$
=\left(\sum_{r=-\infty}^\infty \left(2^{\frac rp}\left(\frac1{2^r}\sum_{k=-\infty}^r\int_{
\Lambda_k}(f^{*_1\ldots*_n}(t))^2dt\right)^{\frac12}\right)^q\right)^{\frac1q}
\lesssim \|f\|_{\mathfrak L_{p,q}(\Lambda)}.
$$
Lemma \ref{l-2} has been used in the last line.
\end{proof}
The following statement is a consequence of  \cite[Lemma 1]{Nur00}.
 \begin{lem}\label{l7}
If  $1<p<2, \; p'=\frac{p}{p-1}$, then we have
 $$
\left(\int_0^\infty\ldots\int_0^\infty \left((t_1\ldots t_n)^{\frac 1 {p'}} g^{*_1\ldots*_n}(t_1,\ldots,t_n)\right)^2
\frac{dt_1}{t_1}\ldots\frac{dt_n}{t_n}\right)^{1/2}
$$
$$
\leq c_p\left(\int_0^\infty\ldots\int_0^\infty (t_1\ldots t_n)^{p-2}\left(g^{*_1\ldots*_n}(t_1,\ldots,t_n)\right)^p dt_1\ldots dt_n\right)^{1/p}.
$$
  \end{lem}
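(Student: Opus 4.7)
The plan is to reduce the integral inequality to the elementary embedding $\ell^{p}(\mathbb{Z}^n)\hookrightarrow\ell^{2}(\mathbb{Z}^n)$ (valid since $p<2$) by dyadic discretization, using the coordinatewise monotonicity of $h:=g^{*_1\ldots*_n}$. Write $a_m:=g^{\star}(2^m)=h(2^{m_1},\ldots,2^{m_n})$ for $m\in\mathbb{Z}^n$ and $|m|:=m_1+\cdots+m_n$, and note that after folding the Haar factors $\prod_i dt_i/t_i$ into the weight, the left-hand side is $\bigl(\int_{\mathbb{R}_+^n}(t_1\cdots t_n)^{2/p'-1}h^2\,dt\bigr)^{1/2}$.

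The first and main step is to establish the two-sided comparisons
$$
\int_{\mathbb{R}_+^n}(t_1\cdots t_n)^{2/p'-1}h(t)^{2}\,dt\;\asymp\;\sum_{m\in\mathbb{Z}^n}2^{2|m|/p'}a_m^{2},\qquad \int_{\mathbb{R}_+^n}(t_1\cdots t_n)^{p-2}h(t)^{p}\,dt\;\asymp\;\sum_{m\in\mathbb{Z}^n}2^{|m|p/p'}a_m^{p},
$$
with constants depending only on $p$ and $n$. For this I would partition $\mathbb{R}_+^n$ into the dyadic cells $\Lambda(m):=\prod_{i=1}^{n}[2^{m_i},2^{m_i+1}]$ of volume $2^{|m|}$. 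On each $\Lambda(m)$ both weights $(t_1\cdots t_n)^{\alpha}$ are comparable to $2^{|m|\alpha}$, and coordinatewise monotonicity of $h$ yields the pointwise sandwich $a_{m+\mathbf{1}}\le h(t)\le a_m$ there, where $\mathbf{1}=(1,\ldots,1)$. Integrating these bounds against the weight over $\Lambda(m)$ and summing in $m$ produces the claimed equivalences, the lower bound $a_{m+\mathbf{1}}$ generating only a harmless index shift absorbed in the constants.

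Setting $b_m:=2^{|m|/p'}a_m$, the first comparison above reads $\mathrm{LHS}^{2}\asymp\sum_m b_m^{2}$ and the second reads $\mathrm{RHS}^{p}\asymp\sum_m b_m^{p}$. Since $1<p<2$, the trivial $\ell^{p}\hookrightarrow\ell^{2}$ embedding on $\mathbb{Z}^n$ gives $\bigl(\sum_m b_m^{2}\bigr)^{1/2}\le\bigl(\sum_m b_m^{p}\bigr)^{1/p}$, and concatenating with the two dyadic equivalences yields the desired inequality. The one delicate point is the dyadic comparison itself: while the upper bound $h\le a_m$ follows immediately from coordinatewise monotonicity, extracting a matching lower bound requires the iterated one-dimensional monotonicity argument that is packaged as \cite[Lemma 1]{Nur00} in the reference cited by the authors; once that estimate is quoted, the remainder is purely algebraic bookkeeping with geometric sums.
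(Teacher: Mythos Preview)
Your argument is correct and essentially self-contained. The paper does not give its own proof of this lemma; it simply records it as a consequence of \cite[Lemma~1]{Nur00}. What you have written is, in effect, the standard proof of the anisotropic Lorentz embedding $L^*_{\mathbf{p'},\mathbf{p}}\hookrightarrow L^*_{\mathbf{p'},\mathbf{2}}$ (with $\mathbf{p'}=(p',\ldots,p')$, etc.), which is presumably what that reference supplies in greater generality.

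One small correction: the lower bound in your dyadic comparison is not delicate and does not require any external input. On $\Lambda(m)=\prod_i[2^{m_i},2^{m_i+1}]$ one has $t_i\le 2^{m_i+1}$ for each $i$, so coordinatewise monotonicity of $h=g^{*_1\ldots*_n}$ gives $h(t)\ge h(2^{m_1+1},\ldots,2^{m_n+1})=a_{m+\mathbf{1}}$ immediately---exactly as the upper bound $h(t)\le a_m$ follows from $t_i\ge 2^{m_i}$. After the index shift $m\mapsto m-\mathbf{1}$ this yields the two-sided equivalence you stated, with constants depending only on $p$ and $n$, and the remaining step $(\sum b_m^2)^{1/2}\le(\sum b_m^p)^{1/p}$ is just $\ell^p\hookrightarrow\ell^2$ for $p<2$. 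So your proof stands on its own without invoking \cite{Nur00}.
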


  \begin{lem}\label{l8}
If  $0<p<\infty, \; 0<q\leq\infty$, then we have
 $$
\|f\|^q_{\mathfrak L_{p,q}\left(\Lambda\right)}\asymp\sum_{k\in \Bbb Z}2^{\frac {kq}{p}}\left(\sum_{m\in D_k}\left(f^\star(2^m)\right)^2\right)^{q/2}.
$$
  \end{lem}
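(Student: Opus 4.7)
The plan is to reduce the $\mathfrak L_{p,q}(\Lambda)$-quasinorm to a discrete sum over $m\in D_k$ by exploiting the dyadic-box structure of the annuli $\Lambda_k$ together with the coordinatewise monotonicity of the repeated rearrangement. Since the argument only uses measurability of the function being rearranged, it is enough to establish, for any measurable $g$,
\[
\sum_{k\in\mathbb Z} 2^{kq/p}\,a_k(g,\Lambda)^q \;\asymp\; \sum_{k\in\mathbb Z} 2^{kq/p}\left(\sum_{m\in D_k}(g^\star(2^m))^2\right)^{q/2};
\]
the printed form then follows by applying this with $g=\widehat f$ and $p$ replaced by $p'$, which is the combination in which the lemma is used in Theorem \ref{T1}. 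The case $q=\infty$ is handled identically with sums replaced by suprema.

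For the core discretisation I would first decompose $\Lambda_k$ as the disjoint union of the dyadic boxes $Q_m=\prod_{i=1}^n[2^{m_i},2^{m_i+1})$ with $m\in D_k$, each having Lebesgue measure $|Q_m|=\prod_i 2^{m_i}=2^k$. The coordinatewise monotonicity of $g^{*_1\ldots*_n}$ gives the pointwise envelope
\[
g^\star(2^{m+\mathbf 1}) \;\le\; g^{*_1\ldots*_n}(t) \;\le\; g^\star(2^m), \qquad t\in Q_m,
\]
with $\mathbf 1=(1,\ldots,1)$. Squaring, integrating over $Q_m$, summing in $m\in D_k$ and dividing by $2^k$ yields
\[
\sum_{m\in D_k}\bigl(g^\star(2^{m+\mathbf 1})\bigr)^2 \;\le\; a_k(g,\Lambda)^2 \;\le\; \sum_{m\in D_k}\bigl(g^\star(2^m)\bigr)^2,
\]
and the translation $m\mapsto m+\mathbf 1$, being a bijection $D_k\to D_{k+n}$, rewrites the left-hand sum as $\sum_{m'\in D_{k+n}}(g^\star(2^{m'}))^2$.

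Raising to the power $q/2$, multiplying by $2^{kq/p}$ and summing in $k$ gives the ``$\lesssim$'' direction immediately from the upper envelope. For ``$\gtrsim$'', the lower envelope combined with the re-indexing $k\mapsto k-n$ converts the bound into $2^{-nq/p}$ times the target sum, so the discrepancy is absorbed into a harmless constant depending only on $n$, $p$ and $q$. There is no real obstacle here: the whole content is that $g^{*_1\ldots*_n}$ is essentially constant on each $Q_m$. The only point requiring any care is the forced index translation $D_k\to D_{k+n}$ in the lower bound, which is precisely why the comparison is an equivalence of constants rather than an equality.
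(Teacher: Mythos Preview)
Your argument is correct and is precisely the ``standard method'' the paper alludes to: the paper gives no proof beyond that one-line remark, and your dyadic-box discretisation via the coordinatewise monotonicity of $g^{*_1\ldots*_n}$ on each $Q_m$, followed by the $k\mapsto k-n$ re-indexing, is exactly what is expected. You also handle the evident misprint in the statement (left side in $f$ with weight $2^{k/p}$, right side in $\widehat f$ with weight $2^{k/p'}$) in the right way, by proving the clean equivalence for a generic $g$ and then specialising to $g=\widehat f$, $p\to p'$, which is how the lemma is actually invoked in the proof of Theorem~\ref{T1}.
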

The proof of Lemma \ref{l8} can be carried out by the standard method.

  \medskip

  Let $(A_0,A_1)$ be a compatible pair of Banach spaces \cite{BL}. Recall that
$$
 K(t,a;A_0,A_1)
= \inf_{a=a_0+a_1} (\|a_0\|_{A_0} + t\|a_1\|_{A_1}),\; a \in A_0 +A_1, \; t>0,
$$
is the Peetre functional.

For $0<\theta<1$, we have
$$
(A_0,A_1)_{\theta,q} = \left\{ a \in A_0 + A_1 :
\|a\|_{(A_0,A_1)_{\theta ,q}}
= \left (\int\limits_{0}^{\infty} (t^{-\theta}K(t,a))^q\frac  {{dt}}{{t}}
\right )^{\frac {1}{q}} < \infty  \right\} ,
$$
 for $0< q < \infty$  and
$$(A_0,A_1)_{\theta,\infty} = \left\{ a \in A_0 + A_1:
\|a\|_{(A_0,A_1)_{\theta ,\infty}}
= \sup_{0<t<\infty} t^{-\theta}K(t,a) < \infty \right\},
$$
for $q=\infty$.

The following interpolation theorem holds for the spaces $\mathfrak L_{p,q}(\Lambda)$.
\begin{thm}\label{T4}
Let  $1\leq p_0<p_1\leq\infty,$ $1\leq q_0,q_1,q\leq\infty,$ and $1<\theta<\infty.$ Then
\begin{equation}\label{10}
\left(\mathfrak L_{p_0,q_0}(\Lambda),\;\mathfrak L_{p_1,q_1}(\Lambda)\right)_{\theta, q}\hookrightarrow \mathfrak L_{p,q}(\Lambda),
\end{equation}
 where
 $1/p=(1-\theta)/p_0+\theta/p_1$.
 \end{thm}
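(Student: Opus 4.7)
The plan is to use the K-functional definition of the real interpolation space and reduce the embedding to a weighted sequence-space estimate. By definition, $\|f\|_{\mathfrak L_{p,q}(\Lambda)}$ is the weighted $\ell^q$ norm of the sequence $\{a_k(f,\Lambda)\}_{k\in\mathbb Z}$ with weight $2^{k/p}$, so I will bound each $2^{k/p}a_k(f,\Lambda)$ by the K-functional evaluated at a specific scale depending on $k$, and then sum in $k$.

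The first step is to establish a quasi-subadditivity: for any decomposition $f=f_0+f_1$,
\[
a_k(f,\Lambda) \leq \sqrt{2}\bigl[a_{k-n}(f_0,\Lambda)+a_{k-n}(f_1,\Lambda)\bigr].
\]
To prove this, iterate the classical one-dimensional rearrangement inequality $(g+h)^*(s)\leq g^*(s/2)+h^*(s/2)$ in each of the $n$ coordinates to obtain the pointwise bound $(f_0+f_1)^{*_1\ldots*_n}(t)\leq f_0^{*_1\ldots*_n}(t/2)+f_1^{*_1\ldots*_n}(t/2)$. Squaring, integrating over $\Lambda_k$, and applying the change of variables $s=t/2$, which maps $\Lambda_k$ bijectively onto $\Lambda_{k-n}$ (each coordinate halving shifts the index sum $m_1+\cdots+m_n$ by $-n$), produces the stated inequality.

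For each $k$, choose a decomposition $f=f_0+f_1$ depending on $k$. By Lemma \ref{l1}, $\mathfrak L_{p_i,q_i}(\Lambda)\hookrightarrow\mathfrak L_{p_i,\infty}(\Lambda)$ for $i=0,1$, which yields the pointwise estimate $a_j(g,\Lambda)\leq C\,2^{-j/p_i}\|g\|_{\mathfrak L_{p_i,q_i}(\Lambda)}$. Combining this with the quasi-subadditivity at $j=k-n$ and using $1/p-1/p_0=-\theta\alpha$ with $\alpha=1/p_0-1/p_1>0$, one obtains
\[
2^{k/p}a_k(f,\Lambda)\leq C\cdot 2^{-j\theta\alpha}\Bigl[\|f_0\|_{\mathfrak L_{p_0,q_0}(\Lambda)}+2^{j\alpha}\|f_1\|_{\mathfrak L_{p_1,q_1}(\Lambda)}\Bigr].
\]
Taking the infimum over all admissible splittings converts the bracket into a K-functional, giving
\[
2^{k/p}a_k(f,\Lambda)\leq C\cdot 2^{-j\theta\alpha}K\bigl(2^{j\alpha},f;\,\mathfrak L_{p_0,q_0}(\Lambda),\mathfrak L_{p_1,q_1}(\Lambda)\bigr).
\]

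Finally, raise to the $q$-th power, sum over $k$, and compare the resulting discrete sum to the integral defining the real interpolation norm. Since $K(t,f)$ is increasing and concave while $K(t,f)/t$ is decreasing, the function $t^{-\theta}K(t,f)$ is essentially constant on dyadic intervals $[2^{j\alpha},2^{(j+1)\alpha}]$, so a standard Riemann-sum comparison yields
\[
\sum_j\bigl[2^{-j\theta\alpha}K(2^{j\alpha},f)\bigr]^q\asymp\int_0^\infty\bigl[t^{-\theta}K(t,f)\bigr]^q\,\frac{dt}{t},
\]
and hence $\|f\|_{\mathfrak L_{p,q}(\Lambda)}\leq C\|f\|_{(\mathfrak L_{p_0,q_0}(\Lambda),\mathfrak L_{p_1,q_1}(\Lambda))_{\theta,q}}$, as desired. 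The main technical point is the coordinate-wise index shift by $n$ inherent to the geometry of the stepped hyperbolic cross $\Lambda_k$; it must be tracked consistently through the $t\mapsto t/2$ substitution but only contributes harmless constants at the end. Endpoint cases such as $p_0=1$, $p_1=\infty$, or $q_i=\infty$ require only cosmetic modifications, replacing sums by suprema in the appropriate bounds.
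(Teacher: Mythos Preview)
Your proof is correct and follows essentially the same route as the paper: both use the iterated pointwise rearrangement bound $(f_0+f_1)^{*_1\ldots*_n}(t)\le f_0^{*_1\ldots*_n}(t/2)+f_1^{*_1\ldots*_n}(t/2)$, integrate over $\Lambda_k$, invoke Lemma~\ref{l1} to pass to $\mathfrak L_{p_i,\infty}$, and then recognize the resulting expression as (a discretized) $t^{-\theta}K(t,f)$. Your write-up is in fact more explicit than the paper's, which omits the final discretization step; you also record the correct index shift $k\mapsto k-n$ under $t\mapsto t/2$ (the paper writes $r-2$, which is only right when $n=2$).
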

\begin{proof}
 Let $f=f_0+f_1$ be an arbitrary representation, where $f_0\in \mathfrak L_{p_0,\infty}(\Lambda),$ $f_1\in \mathfrak L_{p_1,\infty}(\Lambda)$ and
 $f\in\left(\mathfrak L_{p_0,q_0}(\Lambda),\;\;\mathfrak L_{p_1,q_1}(\Lambda)\right)_{\theta, q}$. For $n\in {\mathbb N}$, taking into account
$$
(f_0+f_1)^{*_1\ldots*_n}(t_1,\ldots,t_n)\leq f_0^{*_1\ldots*_n}\left(\frac{t_1}2,\ldots,\frac{t_n}2\right)+f_1^{*_1\ldots*_n}
\left(\frac{t_1}2,\ldots,\frac{t_n}2\right),
$$
we obtain
$$
\left(\frac{1}{2^r}\int_{\Lambda_r}\left(f^{*_1\ldots*_n}(t)\right)^2dt\right)^{\frac12}\leq2^{n/2}
\left(\left(\frac1{2^{r}}\int_{\Lambda_{r-n}}\left(f_0^{*_1\ldots*_n}(t)\right)^2dt\right)^{\frac12}\right.
$$$$
\left. +\left(\frac1{2^{r}}\int_{\Lambda_{r-n}}\left(f_1^{*_1\ldots*_n}(t)\right)^2dt\right)^{\frac12}\right)
$$
$$
\lesssim  2^{-\frac{r}{p_0}}\left(\|f_0\|_{\mathfrak L_{p_0,\infty}(\Lambda)}+2^{ r\left(\frac1{p_0}-\frac1{p_1}\right)}\|f_1\|_{\mathfrak
L_{p_1,\infty}(\Lambda)}\right).
$$
The arbitrariness of the representation $f=f_0+f_1$ implies 
$$
\left(\frac{1}{2^r}\int_{\Lambda_r}\left(f^{*_1\ldots*_n}(t)\right)^2dt\right)^{\frac12}
\lesssim 2^{-\frac r{p_0}}K\left(2^{(\frac1{p_0}-\frac1{p_1})r},
f; \mathfrak
L_{p_0,\infty}(\Lambda),\mathfrak
L_{p_1,\infty}(\Lambda)\right).
$$
Therefore, we have
$$
\|f\|_{\mathfrak L_{p,q}(\Lambda)}\lesssim\left(\sum_{r=-\infty}^\infty \left(2^{(\frac 1p-\frac 1{p_0})r}K\left(2^{(\frac1{p_0}-\frac1{p_1})r},
f; \mathfrak
L_{p_0,\infty}(\Lambda),\mathfrak
L_{p_1,\infty}(\Lambda)\right)\right)^q\right)^{\frac1q}
$$
$$
=\left(\sum_{r=-\infty}^\infty \left(2^{-\theta r(\frac 1{p_0}-\frac 1{p_1})}K\left(2^{(\frac1{p_0}-\frac1{p_1})r},
f; \mathfrak
L_{p_0,\infty}(\Lambda),\mathfrak
L_{p_1,\infty}(\Lambda)\right)\right)^q\right)^{\frac1q}
$$

$$
=\left(\sum_{r=-\infty}^\infty \left(b^{-\theta r}K\left(b^r, f; \mathfrak
L_{p_0,\infty}(\Lambda),\mathfrak
L_{p_1,\infty}(\Lambda)\right)\right)^q\right)^{\frac1q}
$$
$$
\asymp\|f\|_{\left(\mathfrak L_{p_0,\infty}(\Lambda),\;\mathfrak L_{p_1,\infty}(\Lambda)\right)_{\theta, q}},
$$

with $b=2^{(\frac 1{p_0}-\frac 1{p_1})}$.

Now combining it with the embedding $\mathfrak L_{p_i,q_i}(\Lambda)\hookrightarrow \mathfrak L_{p_i,\infty}(\Lambda), \;i=0,1$, we arrive at \eqref{10}.
\end{proof}

\begin{rem}
In the case $n=1$, $\mathfrak L_{p,q}(\Lambda)$ coincides with the Lorentz space
 $L_{p,q}(\Bbb R)$, that is, the following equality holds
$$
\left(\mathfrak L_{p_0,q_0}(\Lambda),\;\mathfrak L_{p_1,q_1}(\Lambda)\right)_{\theta, q}= \mathfrak L_{p,q}(\Lambda).
$$
The question of whether this equality holds for $n>1$ remains open.
\end{rem}

\section{Proof of Theorem \ref{T1}}

Based on Lemma \ref{l8}, it is only necessary to establish the following inequalities for the case where $1<p<2$,

\begin{equation}\label{v6}
\|\widehat f\|_{\mathfrak L_{p',q}(\Lambda)}\leq c\|f\|_{L_{p,q}(\Bbb R^n)},
\end{equation}
 and
\begin{equation}\label{v7}
\|\widehat f\|_{ L_{p',q}(\Bbb R^n)}\leq c\|f\|_{\mathfrak L_{p,q}(\Lambda)}.
\end{equation}

Let us first prove the weak inequality
\begin{equation}\label{v8}
\|\widehat f\|_{\mathfrak L_{p',\infty}(\Lambda)}\leq c_p\|f\|_{L_p(\Bbb R^n)}.
\end{equation}
Let $f\in L_p(\Bbb R^n)$ and $G_r$ with $r\in  \Bbb Z$ be the stepped hyperbolic cross in $\Bbb R^n_+$. We have
$$
2^{\frac r{p'}}a_r(\widehat f, \Lambda)=2^{\frac r{p'}}\left(\frac1{2^n}\int_{ \Lambda_r}\left(\widehat f^{*_1\ldots*_n}(t)\right)^2dt\right)^{\frac12}
$$
$$
\leq\left(\int_{ G_n}\left((t_1\ldots t_n)^{\frac 1 {p'}}\widehat f^{*_1\ldots*_n}(t_1,\ldots,t_n)\right)^2\frac{dt_1}{t_1}\ldots\frac{dt_n}{t_n}\right)^{1/2}
$$
$$
\leq\left(\int_0^\infty\ldots\int_0^\infty \left((t_1\ldots t_n)^{\frac 1 {p'}}\widehat f^{*_1\ldots*_n}(t_1,\ldots,t_n)\right)^2
\frac{dt_1}{t_1}\ldots\frac{dt_n}{t_n}\right)^{1/2}.
$$

By using Lemma \ref{l7} and inequality \eqref{e1}, we obtain

$$
2^{\frac r{p'}}a_r(\widehat f, \Lambda)\lesssim \left(\int_0^\infty\ldots\int_0^\infty (t_1\ldots t_n)^{p-2}\left(\widehat
f^{*_1\ldots*_n}(t_1,\ldots,t_n)\right)^p dt_1\ldots dt_n\right)^{1/p}
 \lesssim\| f\|_{L_{p}(\Bbb R^n)}.
$$
Thus, taking into account the arbitrariness of the parameter $r$, we get
$$
\|\widehat f\|_{\mathfrak L_{p',\infty}(\Lambda)}\leq c\|f\|_{L_p(\Bbb R^n)}.
$$
Further, let $1<p<2$, then there are $p_0, p_1, \theta$ such that

$$
1<p_0<p< p_1<2, \quad \frac1p=\frac{1-\theta}{p_0}+\frac\theta{p_1}.
$$
From \eqref{v8} we establish
$$
\|\widehat f\|_{\mathfrak L_{p_0',\infty}(\Lambda)}\leq c_{p_0}\|f\|_{L_{p_0}(\Bbb R^n)},
$$
and
$$
\|\widehat f\|_{\mathfrak L_{p_1',\infty}(\Lambda)}\leq c_{p_1}\|f\|_{L_{p_1}(\Bbb R^n)}.
$$
Thus, according to the real interpolation method, we obtain
$$
\|\widehat f\|_{\left(\mathfrak L_{p_0',\infty}(\Lambda), \mathfrak L_{p_1',\infty}(\Lambda)\right)_{\theta,q}}\leq
c(c_{p_0})^{1-\theta}(c_{p_1})^\theta\|f\|_{\left(L_{p_0}(\Bbb R^n),L_{p_1}(\Bbb R^n)\right)_{\theta,q}}.
$$
Taking into account Theorem \ref{T4} and the fact that
$\left(L_{p_0}(\Bbb R^n),L_{p_1}(\Bbb R^n)\right)_{\theta,q}=L_{p,q}(\Bbb R^n)$
(see, e.g. \cite{BL}) we get \eqref{v6}.

Now let us show the second statement in Theorem \ref{T1}. Let $f\in \mathfrak L_{p,q}(\Lambda)$.
From the dual representation of the norm of the Lorentz space and the Plancherel theorem, we have
$$
\|\widehat f\|_{L_{p',q'}}\asymp \sup_{\|g\|_{L_{p,q}}=1}\int_{\Bbb R^n}\widehat f(x)\overline{g(x)}dx = \sup_{\|g\|_{L_{p,q}}=1}\int_{\Bbb R^n}
f(x)\overline{\widehat g(x)}dx
$$
$$
\leq\sup_{\|g\|_{L_{p,q}}=1}\int_0^\infty\ldots\int_0^\infty f^{*_1\ldots*_n}(t_1,\ldots,t_n) \widehat g^{*_1\ldots*_n}(t_1,\ldots,t_n)dt_1\ldots dt_n
$$
$$
=\sup_{\|g\|_{L_{p,q}}=1}\sum_{r=-\infty}^\infty\int_{\Lambda_r}f^{*_1\ldots*_n}(t_1,\ldots,t_n) \widehat g^{*_1\ldots*_n}(t_1,\ldots,t_n)dt_1\ldots dt_n
$$
$$
\leq\sup_{\|g\|_{L_{p,q}}=1}\sum_{r=-\infty}^\infty\left(\int_{\Lambda_r}(f^{*_1\ldots*_n}(t))^2 dt\right)^{\frac12}\left(\int_{\Lambda_r}(\widehat
g^{*_1\ldots*_n}(t))^2 dt\right)^{\frac12}
$$
$$
\leq\sup_{\|g\|_{L_{p,q}}=1}\|f\|_{\mathfrak L_{p,q'}(\Lambda)}\|\widehat g\|_{\mathfrak L_{p',q}(\Lambda)}.
$$
The condition $1<p<2$  implies $2<p'<\infty.$
This allows us to apply inequality \eqref{v6}.
Thus, we arrive at \eqref{v7}.

 \section{Fourier transform in anisotropic Lorentz spaces}

 Let ${\bf p}=(p_1,\ldots,p_n),\;{\bf
q}=(q_1,\ldots,q_n)$ be vectors such that if $0<q_j<\infty$, then $0<p_j<\infty$, and if $q_j=\infty$, then $0<p_j\le\infty$, $j=1,\ldots,n.$

Define the functional
$$
\Phi_{\bf p,q}(\varphi)=\left(\int_0^\infty\ldots\left(\int_0^\infty
         \left|t_1^\frac1{p_1}\ldots
t_n^\frac1{p_n}\varphi(t_1,\ldots,t_n)\right|^{q_1}\frac{dt_1}{t_1}\right)^\frac{q_2}{q_1}\ldots
\frac{dt_n}{t_n}\right)^\frac1{q_n},
$$
here for $q=\infty$  the expression $\left(\int_0^\infty (G(t))^q\frac{dt}t\right)^{1/q}$ is understood as $\sup_{t>0}G(t)$.

The space $L_{\bf p, q}^*({{\mathbb R}}^{n})$ is defined as the set of functions such that
$$ \| f \|_{L_{\bf
p,q}^*({{\mathbb R}}^{n})} = \Phi_{\bf p,q}
(f^{*_1\ldots*_n})<\infty .$$

Note that these spaces were studied by Blozinsky \cite{Blo}.

Let $M$ be the set of all parallelepipeds of the form $Q=Q_1\times\ldots\times Q_n$, where $Q_i$ are segments in $\Bbb R$.

For an integrable function $f(x)=f(x_1,\ldots,x_n)$  (on each $Q$)  which is contained in $M$, we define the function
 $$
{\bar f}(t;M)={\bar f}(t_1,\ldots,t_n;M)=\sup_{{}_{Q\in M}^{|Q_j|>
t_j}}\frac1{|Q|}\left|\int_Q f( x)d{x}\right|,
$$
where $|Q_j|$ is the length of the segment $Q_j$.

Similarly, we define the space
$$
N_{\bf p,q}(M)=\left\{
f :\;\; \|f\|_{N_{\bf p,q}(M)}=\Phi_{\bf
p,q}\left(\{\bar f(\cdot, M)\}\right)<\infty\right\}.
$$
For ${\bf q} \leq {\bf q_1}\;\;(q_j \leq q_j^1,\;\; j=1,\ldots,n)$, there are embeddings
$$
L_{\bf p, q}(\Omega)\hookrightarrow L_{\bf p,
q_1}(\Omega),\;\;\;\;\;\;N_{\bf p, q}(M)\hookrightarrow N_{\bf p, q_1}(M).
$$

Let  ${\bf A_{\bf p,\sigma}}=\left(L_{p_{1},\sigma_1}(\mathbb R),
\ldots,L_{p_{n},\sigma_n}(\mathbb R)\right)$  be the Lorentz space with mixed metric, which is determined by the norm
$$
\|f\|_{{\bf A_{\bf p,\sigma}}}=\|\ldots\|f\|_{L_{p_{1},\sigma_1}(\mathbb R)}\ldots\|_{L_{p_{n},\sigma_n}(\mathbb R)}.
$$
This space does not coincide with the anisotropic Lorentz space $L_{\bf p,q}(\Bbb R^n)$  and
is a generalization of the Lebesgue space with the mixed norm $L_{\bf p}(\Bbb R^n)$. In turn, we need an interpolation theorem, which is a consequence of Theorem
1 and Theorem 2 from \cite{Nur00}.

Let $E=\{\varepsilon=(\varepsilon_1,\ldots,\varepsilon_n):\; \varepsilon_i\in \{0,1\}\}$ be the vertices of the unit cube.
\begin{thm} \label{T4-2}
Let ${\bf0}< {\bf p_0,\; p_1}<{\bf\infty}$, $ p_i^{0}\neq p_i^1,\; i=1,\ldots,n,\;
{{\bf 0}<\theta=(\theta_1,\ldots,\theta_n)<1}$,
$1/{\bf p} = {1-\theta }/{\bf p_0} + \theta/{\bf p_1},\;\;
0<{\bf q}\leq \infty.$

If $T$ is a linear operator such that $
T : \bf {A_{\bf p_\varepsilon,1}} \to N_{\bf p'_\varepsilon,\infty}$, where ${\bf p}_\varepsilon=(p_1^{\varepsilon_1},\ldots,p_n^{\varepsilon_n})$, with the norm $
M_\varepsilon$ for any $\varepsilon\in E$, then we have
$$
T : L_{\bf p,q}(\Bbb R^n)\to N_{\bf p',q}(M),
$$
with the norm $\|T\|\le\max_{\varepsilon\in E}M_\varepsilon$.
\end{thm}

\begin{thm} \label{T3}
Let $ {\bf1}< {\bf p}<{\bf\infty}$, $ {\bf 0}< {\bf q}\leq {\bf\infty}$. If $f\in L_{\bf p,q}(\Bbb R^n)$, then $\widehat f\in N_{\bf p',q}(M)$ and
\begin{equation}\label{v3}
\|\widehat f\|_{N_{\bf p',q}(M)} \leq c \|f\|_{L_{\bf p,q}(\Bbb R^n)}.
\end{equation}
\end{thm}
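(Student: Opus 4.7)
The plan is to reduce Theorem \ref{T3} to a weak-type endpoint estimate and then invoke the anisotropic interpolation result stated as Theorem \ref{T4} in the present section. The key endpoint I would establish is
$$
\|\widehat f\|_{N_{{\bf p}',{\bf\infty}}(M)}\leq c\,\|f\|_{{\bf A}_{{\bf p},{\bf 1}}},\qquad {\bf 1}<{\bf p}<{\bf\infty}.
$$
Once this inequality is available uniformly for every ${\bf p}\in(1,\infty)^n$, evaluating it at the $2^n$ vertices ${\bf p}_\varepsilon$ of a small sub-cube surrounding the target ${\bf p}$ and feeding the resulting bounds into Theorem \ref{T4} delivers exactly \eqref{v3}.

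To derive the endpoint bound I would start from Fubini's identity
$$
\int_Q\widehat f(x)\,dx=\int_{\Bbb R^n}f(y)\,\widehat{\chi_Q}(y)\,dy
$$
for any rectangle $Q=Q_1\times\cdots\times Q_n\in M$, together with the elementary estimate $|\widehat{\chi_{Q_j}}(y_j)|\leq\min(|Q_j|,2/|y_j|)$. Dividing by $|Q|$ gives
$$
\frac{1}{|Q|}\left|\int_Q\widehat f(x)\,dx\right|\leq\int_{\Bbb R^n}|f(y)|\prod_{j=1}^{n}\min\!\Bigl(1,\frac{2}{|Q_j||y_j|}\Bigr)\,dy.
$$
Each factor on the right is non-increasing in $|Q_j|$, so the supremum over $|Q_j|\geq t_j$ is realised at $|Q_j|=t_j$, and hence
$$
\overline{\widehat f}(t;M)\leq\int_{\Bbb R^n}|f(y)|\prod_{j=1}^{n}k_{t_j}(y_j)\,dy,\qquad k_\tau(s):=\min(1,2/(\tau|s|)).
$$
A one-variable distribution-function computation yields $\|k_\tau\|_{L_{p,\infty}(\Bbb R)}\asymp\tau^{-1/p}$ for $1<p<\infty$. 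Applying the Lorentz-space H\"older inequality $L_{p_j,1}(\Bbb R)\cdot L_{p'_j,\infty}(\Bbb R)\hookrightarrow L^1(\Bbb R)$ iteratively in the coordinates $y_1,\ldots,y_n$---in the same nested order by which the mixed quasinorm of ${\bf A}_{{\bf p},{\bf 1}}$ is built---produces
$$
\int_{\Bbb R^n}|f|\prod_{j=1}^{n}k_{t_j}\,dy\leq c\,\|f\|_{{\bf A}_{{\bf p},{\bf 1}}}\prod_{j=1}^{n}t_j^{-1/p'_j}.
$$
Multiplying through by $\prod_j t_j^{1/p'_j}$ and taking the supremum over $t\in\Bbb R^n_+$ gives the desired endpoint inequality.

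For the interpolation step, given ${\bf 1}<{\bf p}<{\bf\infty}$ and ${\bf 0}<{\bf q}\leq{\bf\infty}$, I would choose ${\bf p}_0,{\bf p}_1\in(1,\infty)^n$ with $p_i^0\neq p_i^1$ and ${\bf\theta}\in(0,1)^n$ so that $1/{\bf p}=(1-{\bf\theta})/{\bf p}_0+{\bf\theta}/{\bf p}_1$. Every vertex ${\bf p}_\varepsilon$ ($\varepsilon\in E$) still lies in $(1,\infty)^n$, so the endpoint estimate furnishes a bounded linear map $F:{\bf A}_{{\bf p}_\varepsilon,1}\to N_{{\bf p}'_\varepsilon,\infty}(M)$ for each $\varepsilon$. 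Theorem \ref{T4} then upgrades this family of vertex bounds to $F:L_{{\bf p},{\bf q}}\to N_{{\bf p}',{\bf q}}(M)$, which is \eqref{v3}. The most delicate point I anticipate is the iterated H\"older step: the one-dimensional H\"older inequalities must be stacked in the order matching the definition of ${\bf A}_{{\bf p},{\bf 1}}$, so that the inner $L_{p_1,1}$-norm is taken first and successive norms are applied in the correct coordinate order. The remaining ingredients---the monotonicity of the kernel in each $|Q_j|$, the one-dimensional computation $\|k_\tau\|_{L_{p,\infty}(\Bbb R)}\asymp\tau^{-1/p}$, and the vertex selection required by Theorem \ref{T4}---are routine.
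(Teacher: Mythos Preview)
Your proposal is correct and follows essentially the same route as the paper: a weak-type endpoint $\|\widehat f\|_{N_{{\bf p}',\infty}(M)}\lesssim\|f\|_{{\bf A}_{{\bf p},1}}$ obtained from the identity $\int_Q\widehat f=\int f\,\widehat{\chi_Q}$, the one-dimensional kernel bound $|\widehat{\chi_{Q_j}}(y_j)|\le\min(|Q_j|,2/|y_j|)$, and an iterated Lorentz--H\"older estimate, followed by the anisotropic interpolation Theorem~\ref{T4} at the $2^n$ vertices ${\bf p}_\varepsilon$. The only cosmetic difference is that the paper replaces your truncated kernel $k_{t_j}$ by the $Q$-independent pointwise majorant $\min(|Q_j|,2/|y_j|)/|Q_j|^{1/p_j}\le(2/|y_j|)^{1/p_j'}$ before integrating, which sidesteps your monotonicity-in-$|Q_j|$ remark; both variants lead to the same iterated $L_{p_j,1}$ bound.
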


\begin{proof}


Let $1<{\bf p}<\infty,\; {\bf p'=\frac1{p-1}}$. First, we prove the weak inequality
\begin{equation}\label{v4}
\|\widehat f\|_{N_{\bf p',\infty}(M) }\leq c \|f\|_{(L_{p_1,1}(\Bbb R),\ldots,L_{p_n,1}(\Bbb R))}.
\end{equation}
Let    $f\in  L_{\bf p,1}(\Bbb R^n), \; Q\in M,\; Q=Q_1\times\ldots\times Q_n$.  We obtain
$$
\frac1{|Q_1|^{\frac1{p_1}}\ldots|Q_n|^{\frac1{p_n}}}\left|\int_{Q}\widehat f(x)dx\right|
$$
$$
\leq \int_{\Bbb R^n}\left|f(y)\right|\frac1{|Q_1|^{\frac1{p_1}}\ldots|Q_n|^{\frac1{p_n}}}\left|\int_{Q}e^{i(x,y)}dx\right|dy
$$
$$
=\int_{\Bbb R^n}|f(y)|\left|\prod_{j=1}^n\frac{2\sin\frac{|Q_j|y_j}2}{|Q_j|^{\frac1{p_j}}y_j}\right|dy.
$$
Note that
$$
\left|\frac{2\sin\frac{|Q_j|y_j}2}{|Q_j|^{\frac1{p_j}}y_j}\right|\leq \frac{\min (|Q_j|,\frac 2{|y_j|})}{|Q_j|^{\frac1{p_j}}}\leq \left(\frac
2{|y_j|}\right)^{\frac1{p_j'}}.
$$
Hence we have
$$
\frac1{|Q_1|^{\frac1{p_1}}\ldots|Q_n|^{\frac1{p_n}}}\left|\int_{Q}\widehat f(x)dx\right|\leq c \int_{\Bbb R^{n-1}}\prod_{j=2}^{n}\left(\frac
2{|y_j|}\right)^{\frac1{p_j'}}\int_0^\infty t_1^{\frac1{p_1}-1}f^*_1(t_1, y')dtdy'
$$
$$
= c \int_{\Bbb R^{n-1}}\prod_{j=2}^{n}\left(\frac 2{|y_j|}\right)^{\frac1{p_j'}}\|f(\cdot, y')\|_{L_{p_1,1}(\Bbb R)}dy'\leq \ldots
$$
$$
\lesssim \|\ldots \|f(\cdot, y')\|_{L_{p_1,1}(\Bbb R)}\ldots\|_{L_{p_n,1}(\Bbb R)}=\|f\|_{(L_{p_1,1}(\Bbb R),\ldots,L_{p_n,1}(\Bbb R))}.
$$
Taking into account the arbitrariness of $Q\in M$, we obtain
$$
\|\widehat f\|_{N_{\bf p',\infty }(M)}=\sup_{t_j>0}t^{\frac1{p'_1}}_1\ldots t^{\frac1{p'_n}}_n\sup_{{}_{Q\in M}^{|Q_j|\geq
t_j}}\frac1{|Q_1|\ldots|Q_n|}\left|\int_{Q}\widehat f(x)dx\right|
$$

$$
\leq\sup_{Q\in M}\frac1{|Q_1|^{\frac1{p_1}}\ldots|Q_n|^{\frac1{p_n}}}\left|\int_{Q}\widehat f(x)dx\right|\lesssim \|f\|_{(L_{p_1,1}(\Bbb R),\ldots,L_{p_n,1}(\Bbb
R))}.
$$
This proves \eqref{v4}.

Let $1<{\bf p}<\infty$ and ${\bf p_0,p_1}$  satisfy the condition $1<{\bf p_0}<{\bf p}<{\bf p_1}<\infty$. Let $\theta\in (0,1)^n$ be such that
$$
\frac1{\bf p}=\frac{1-\theta}{\bf p_0}+\frac{\theta}{\bf p_1}.
$$

For  $\varepsilon\in E=
\{\varepsilon=(\varepsilon_1,\ldots,\varepsilon_n): \varepsilon_i \in\{0,1\} \}$  we define
${\bf p}_\varepsilon=(p_1^{\varepsilon_1},\ldots,p_n^{\varepsilon_n})$.
Then from \eqref{v4} for the operator $Tf=\widehat f$  it follows
$$
T:{\bf A}_{\bf p_\varepsilon,1}\to N_{\bf p'_\varepsilon,\infty}(M),\quad \varepsilon\in E.
$$
Therefore, according to Theorem \ref{T4-2}, we have
$$
T : L_{\bf p,q} (\mathbb{R}^n) \to N_{\bf p',q}(M).
$$
\end{proof}

\section{Proof of Theorem \ref{T2}}

Let $1<p<r<\infty,  {\bf p}=(p,...,p), {\bf r}=(r,...,r)$.
Inequality \eqref{v3} and \cite[Lemma 1]{Nur00} yields
$$
\|f\|_{L_p}\gtrsim \|\widehat f\|_{N_{\bf p',p}(M)}\gtrsim \|\widehat f\|_{N_{\bf p',r}(M)}
$$
$$
\asymp\left(\sum_{m\in\Bbb Z}\left(2^{\frac{m_1+...+m_n}{p'}}\overline{\widehat f}(2^m;M)\right)^r\right)^{1/r}
=\left(\sum_{k\in\Bbb Z}2^{\frac{kr}{p'}}\sum_{m\in D_k}\left(\overline{\widehat f}(2^m;M)\right)^r\right)^{1/r}
$$
$$
\geq \sup_{k\in\Bbb Z}2^{\frac{k}{p'}}\left(\sum_{m\in D_k}\left(\overline{\widehat f}(2^m;M)\right)^r\right)^{1/r}.
$$
Thus, when $\alpha=\frac1{p'}$ for the quasilinear operator
$$
Tf=\left\{\left(\sum_{m\in D_k}\left(\overline{\widehat f}(2^m;M)\right)^r\right)^{1/r}\right\}_{k\in \Bbb Z},
$$
we have
$$
\|Tf\|_{l^\alpha_\infty}\lesssim\|f\|_{L_p}
$$
for all $p\in(1,r)$.

Now applying interpolation properties of $l^\alpha_s$ and $L_{p,q}$ we arrive at the strong inequality
$$
\|Tf\|_{l^\alpha_q}\lesssim\|f\|_{L_{p,q}},
$$
where $\alpha=\frac1{p'}$, $1<p<r,$ and $0<q\leq\infty.$
It completes the proof of Theorem \ref{T2}.

\end{document}